\theoremstyle{plain}
\newtheorem{thm}{Theorem}[section]
\newtheorem{lemma}[thm]{Lemma}
\numberwithin{equation}{section}
\let\originalleft\left
\let\originalright\right
\renewcommand{\left}{\mathopen{}\mathclose\bgroup\originalleft}
\renewcommand{\right}{\aftergroup\egroup\originalright}
\renewcommand\Re{\operatorname{Re}}
\renewcommand\Im{\operatorname{Im}}
\newcommand\twoln[2]{{\substack{#1 \\ #2}}}
\newcommand\thrln[3]{{\substack{#1 \\ #2 \\ #3}}}
\newcommand\e{e}
\newcommand\emod[2]{e\left( \frac{#1}{#2} \right)}
\newcommand\Res[1]{\underset{#1}{\operatorname{Res}}\,\,}
\newcommand\smod[1]{\, \left(#1\right)}
\newcommand\mf[1]{\mathfrak{#1}}
\newcommand\BigO[1]{\mathcal{O}\left(#1\right)}
\newcommand\supp{\operatorname{supp}}
\newcommand\vcent[1]{\vcenter{\hbox{\( \displaystyle #1\)}}}
\title{On a certain additive divisor problem}
\author{Berke Topacogullari}
\address{Mathematisches Institut, Bunsenstr. 3-5, D-37073 G\"ottingen, Germany}
\email{btopaco@uni-goettingen.de}
\thanks{This work was supported by the Volkswagen Foundation}
\begin{document}

\bibliographystyle{abbrv}
\subjclass[2010]{Primary 11N37; Secondary 11N75}
\keywords{divisor function, additive divisor problem, shifted convolution sum, Kuznetsov formula}

\begin{abstract}
  We prove an asymptotic formula for a variant of the binary additive divisor problem with linear factors in the arguments, which has a power saving error term and which is uniform in all involved parameters.
\end{abstract}

\maketitle

\section{Introduction}

Additive divisor problems have a rich history in analytic number theory.
A classical example is given by the problem of finding asymptotic estimates for
\begin{align} \label{eqn: binary additive divisor problem}
  \sum_{n \leq x} d(n) d(n + h), \quad h \geq 1,
\end{align}
also known as the binary additive divisor problem.
There has been a lot of effort in studying this problem (see \cite{Mot94} for a historical survey), one reason being its intimate link to the fourth power moment of the Riemann zeta function.

In other applications to \(L\)-functions (see \cite{BHM07b} and \cite{DFI94a}), variations of this problem have come up, usually stated in the form
\begin{align} \label{eqn: first variation of the binary additive divisor problem}
  D(x_1, x_2) := \sum_{r_1 n_2 - r_2 n_1 = h} w_1\left( \frac{n_1}{x_1} \right) w_2\left( \frac{n_2}{x_2} \right) d(n_1) d(n_2).
\end{align}
Here \(r_1\) and \(r_2\) are positive coprime integers, \(h\) is non-zero, and \(w_1\) and \(w_2\) are smooth weight functions, which we assume to be compactly supported in \( [ 1/2, 1 ] \) (the assumption that \(r_1\) and \(r_2\) be coprime is not restrictive -- otherwise \(h\) has to be divisible by their greatest common divisor, and we can divide both sides of the equation by that number).

Although the classical case \( r_1 = r_2 = 1 \) has probably received most of the attention, there have been some nice results for general \(r_1\), \(r_2\) as well.
Besides the implicit treatment in \cite{BHM07b}, there is the work of Duke, Friedlander and Iwaniec \cite{DFI94b}, who showed that
\begin{align} \label{eqn: result of DFI}
  D(x_1, x_2) = \text{(main term)} + \BigO{ (r_2 x_1 + r_1 x_2)^\frac14 (r_1 r_2 x_1 x_2)^{\frac14 + \varepsilon} }.
\end{align}
As they didn't make use of spectral theory, the size of the error term is inferior compared to what can be achieved for \eqref{eqn: binary additive divisor problem}.
Nevertheless, the range of uniformity in \(r_1\), \(r_2\) and \(h\) for which this asymptotic formula is non-trivial is quite impressive.
At this point we also want to mention the work of Aryan \cite{Ary15}, who improved the result in the case \( r_2 = 1 \).

With applications in mind that will be considered elsewhere, we have come across the following sum, which turned out to be an interesting problem in its own right:
\[ D(x_1, x_2) = \sum_{n} w_1\left( \frac{r_1 n + f_1}{x_1} \right) w_2\left( \frac{r_2 n + f_2}{x_2} \right) d(r_1 n + f_1) d(r_2 n + f_2). \]
Note that for \( (r_1, r_2) = 1 \) and the choice \( h =  r_1 f_2 - r_2 f_1 \), this is exactly the same sum as \eqref{eqn: first variation of the binary additive divisor problem}.
For \(r_1\) and \(r_2\) not coprime, however, we are confronted with a different problem and the following result seems to be new.

\begin{thm} \label{thm: main theorem, smooth version}
  Set
  \begin{align}
    r_0 := \min\left\{ \left( r_1, {r_2}^\infty \right), \left( r_2, {r_1}^\infty \right) \right\} \quad \text{and} \quad h :=  r_1 f_2 - r_2 f_1. \label{eqn: definition of r_0}
  \end{align}
  Then we have for \( h \neq 0 \) and \( f_1 \ll {x_1}^{1 - \varepsilon} \), \( f_2 \ll {x_2}^{1 - \varepsilon} \),
  \[ D(x_1, x_2) = M(x_1, x_2) + \BigO{ r_0 (r_2 x_1)^{\frac12 + \theta + \varepsilon} }, \]
  where the main term is given by
  \[ M(x_1, x_2) := \int \! w_1\left( \frac{r_1 \xi + f_1}{x_1} \right) w_2\left( \frac{r_2 \xi + f_1}{x_2} \right) P( \log(r_1 \xi + f_1), \log(r_2 \xi + f_2) ) \, d\xi, \]
  with \( P(\xi_1, \xi_2) \) a quadratic polynomial depending on \(r_1\), \(r_2\), \(f_1\) and \(f_2\).
  The implicit constants depend only on \(w_1\), \(w_2\) and on \(\varepsilon\).
\end{thm}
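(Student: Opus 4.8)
The plan is to run the standard machinery for additive divisor problems --- the hyperbola method, Voronoi summation for the divisor function, and the Kuznetsov trace formula --- while carefully following the extra divisibility produced by a common factor of $r_1$ and $r_2$; this is exactly what separates the problem from \eqref{eqn: first variation of the binary additive divisor problem} and what accounts for the factor $r_0$.

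First I would open one of the two divisor functions --- say $d(r_1 n + f_1)$, the choice between the two being made so as to optimise the final estimate, which is the source of the minimum in \eqref{eqn: definition of r_0} --- by the hyperbola method with a smooth dyadic partition of unity, writing $d(r_1 n + f_1) = \sum_A \sum_{ab = r_1 n + f_1} V_A(a)$ with $A$ dyadic, $A \ll x_1^{1/2}$ (the complementary range $a > \sqrt{x_1}$ being symmetric). Fixing $a$ and setting $r_1 n + f_1 = am$, the congruence $am \equiv f_1 \smod{r_1}$ confines $m$ to a residue class modulo $r_1/(a, r_1)$; one then checks that $r_2 n + f_2$ equals $(r_2 am + h)/r_1$ and is automatically an integer, lying in a residue class $c$ modulo $q := r_2 a/(a, r_1)$. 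In this way $D(x_1, x_2)$ becomes a weighted sum over $a$ of sums of the divisor function over arithmetic progressions, $\sum_{\ell \equiv c \smod{q}} d(\ell)\, \Phi_a(\ell)$, where $\Phi_a$ is smooth, supported on $\ell \asymp x_2$, and has derivatives of the expected size.

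Next I would apply the Voronoi summation formula for $d$ in arithmetic progressions to each inner sum. Its principal term, summed back over $a$, produces the main term $M(x_1, x_2)$ together with the quadratic polynomial $P$ in the two logarithms, and its truncation costs only an admissible error. The dual term is $q^{-1} \sum_{\pm} \sum_{\ell \ge 1} d(\ell)\, \emod{\mp \overline{c}\ell}{q}\, \Phi_a^{\pm}(\ell/q^2)$, modified by Ramanujan sums when $(c, q) > 1$; here the Hankel-type transform $\Phi_a^{\pm}$ is negligible unless $\ell \ll (r_2 A)^2 x_2^{-1 + \varepsilon}$, so the dual sum is void when $A$ is too small. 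Opening $d(\ell) = \sum_{\ell_1 \ell_2 = \ell} 1$, fixing the shorter of $\ell_1, \ell_2$, and applying a reciprocity step to separate the modulus $q$ from the long variable, I would reassemble the sum over $a$ --- which, through $q \asymp r_2 a$, plays the role of the modulus --- into sums of Kloosterman sums. This is the delicate point: the residue $c$ depends on $a$ through $\overline{a}$ and through the factors $(a, r_1)$ and $(a, r_2)$, and the bookkeeping must be arranged so that the part of $r_1$ built from primes dividing $r_2$ (and the part of $r_2$ built from primes of $r_1$) enters the modulus only linearly, that is, only through $r_0$.

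Finally I would feed the resulting sums of Kloosterman sums into the Kuznetsov trace formula for the appropriate level and bound the three spectral contributions separately: for the Maass cusp forms one pairs a bound for the Bessel transform of the test function with the spectral large sieve and the best known exponent $\theta$ towards the Ramanujan--Petersson conjecture; the holomorphic forms are treated in the same way (effectively with $\theta = 0$); and the Eisenstein part is controlled by standard bounds for $\zeta$ on the critical line. Collecting the estimates over the dyadic decomposition then gives the error term $\BigO{r_0 (r_2 x_1)^{1/2 + \theta + \varepsilon}}$. I expect the real difficulty to lie not in any of the analytic estimates, which are by now routine, but in carrying the non-coprime arithmetic through the Voronoi and Kuznetsov steps while keeping full uniformity in $r_1, r_2, f_1, f_2$ and $h$ --- in particular, in ensuring that the common factor of $r_1$ and $r_2$ enters the final bound only to the first power, as $r_0$, rather than as a larger power.
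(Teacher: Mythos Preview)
Your outline has the right overall shape --- hyperbola decomposition, Voronoi, Kuznetsov, large sieve --- and you have correctly located the difficulty in the step where the Kloosterman sums must be put into a form to which Kuznetsov applies. But that is precisely where your proposal has a genuine gap.

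After Voronoi, the Kloosterman sums that arise have (in the paper's normalisation) the shape
\[
  \sum_{\substack{c\\(c,s_2 u_2^\ast)=1}} \frac{S\!\left(f_1 - g_2 r_1 \overline{s_2},\, \pm n;\, r_1^\ast c\right)}{r_1^\ast c}\, F(r_1^\ast c),
\]
where the inverse $\overline{s_2}$ is taken modulo $dc$ and therefore \emph{depends on the summation variable $c$}. When $(r_1,r_2)=1$ this can be absorbed into a suitable pair of cusps for $\Gamma_0(q)$, but when $r_1$ and $r_2$ share prime factors it cannot: the sum is simply not of Kuznetsov type. You acknowledge this (``the residue $c$ depends on $a$ through $\overline a$\ldots''), but the mechanism you propose --- opening $d(\ell)$ in the dual sum and applying an unspecified ``reciprocity step'' --- does not address it. Additive reciprocity moves inverses between moduli, but here the obstruction is that the modulus $r_1^\ast c$ and the entry $\overline{s_2}\bmod c$ are entangled through a factor of $r_1^\ast$ that shares primes with $s_2$; reciprocity does not untangle this, and I do not see a way to reach a Kuznetsov-ready sum along the line you sketch.

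The paper's device is different and quite specific. One writes $r_1^\ast = t_1 v$ with $v=(r_1^\ast,(s_2 u_2^\ast)^\infty)$, uses twisted multiplicativity to factor the Kloosterman sum as a piece modulo $v$ times a piece modulo $t_1 c$, and then --- this is the key idea, borrowed from Blomer--Mili\'cevi\'c --- expands the $v$-piece over Dirichlet characters $\chi\bmod v$ to detach $c$:
\[
  \frac{S(\overline{tc},\overline{tc};v)}{v}
  = \frac{1}{\varphi(v)}\sum_{\chi\bmod v}\chi(tc)\,\hat S_v(\chi).
\]
One is then left with \emph{character-twisted} Kloosterman sums over $c$, to which a Kuznetsov formula with nebentypus and a non-infinite cusp applies. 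Making this quantitative requires a further nontrivial input: the estimate $\hat S_v(\chi;n)\ll (f_1,n,v/\operatorname{cond}\chi)\,v^\varepsilon$, proved via Weil-strength bounds for short mixed character sums. Without both the character separation and this bound on $\hat S_v$, the factor $v\le r_0$ would enter the error with a worse exponent than the first power you are aiming for. Your plan, as written, contains neither ingredient, so the passage from Voronoi to Kuznetsov is not actually carried out. A secondary difference: the paper does not open $d(n)$ in the dual sum but keeps it as a coefficient sequence and appeals to the spectral large sieve; your extra opening of $d(\ell)$ is not needed and would not by itself resolve the structural obstruction above.
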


By \(\theta\) we denote the bound in the Ramanujan-Petersson conjecture (see section \ref{subsection: Automorphic forms and their Fourier expansions} for a precise definition).
The polynomial in the theorem above can be stated fairly explicitly, see \eqref{eqn: the polynomial in the main term}.
We haven't aimed for the largest possible range of uniformity in \(f_1\) and \(f_2\).
In fact, with some work it should be possible to extend our result to include \(f_1\), \(f_2\) in a larger range than required above.
It also seems likely that the dependance on \(r_0\) is not optimal, but here it is not immediately clear how an improvement might be achieved.
Compared to \eqref{eqn: result of DFI} our result has a better error term, although their estimate is valid for much larger \(h\) than ours.
In the case \( r_2 = 1 \), our result is the same as \cite[Theorem 0.3]{Ary15}.

We also state the following analogous result for the sum with sharp cut-off.

\begin{thm} \label{thm: main theorem}
  Let \(r_0\) and \( h \neq 0 \) be defined as above.
  Assume that
  \[ f_1 \ll (r_1 x)^{1 - \varepsilon}, \quad f_2 \ll (r_2 x)^{1 - \varepsilon} \quad \text{and} \quad (r_0 r_1 r_2, h) h \ll {r_0}^\frac13 (r_1 r_2)^\frac53 x^{\frac13 - \varepsilon}. \]
  Then
  \[ \sum_{\frac x2 < n \leq x} d(r_1 n + f_1) d(r_2 n + f_2) = x P(\log x) + \BigO{ (r_0 r_1 r_2, h)^\theta {r_0}^{\frac23 + \theta} (r_1 r_2)^\frac13 x^{\frac23 + \varepsilon} }, \]
  where \( P(\xi) \) is a quadratic polynomial depending on \(r_1\), \(r_2\), \(f_1\) and \(f_2\), and where the implicit constants only depend on \(\varepsilon\).
\end{thm}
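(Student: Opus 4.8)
The plan is to deduce Theorem~\ref{thm: main theorem} from Theorem~\ref{thm: main theorem, smooth version} by smoothing the sharp cut‑off $\mathbf{1}_{(x/2,\,x]}$ and then optimising the smoothing scale. Introduce a parameter $\delta\in(x^{-1+\varepsilon},1]$, to be fixed at the end, and put $x_1:=r_1 x+f_1$, $x_2:=r_2 x+f_2$ (so $x_i\asymp r_i x$, by the hypotheses on the $f_i$). Using $f_i\ll(r_i x)^{1-\varepsilon}$ one can choose smooth functions $w_1^{\pm},w_2^{\pm}$, supported in fixed compact subsets of $(0,\infty)$ (all that Theorem~\ref{thm: main theorem, smooth version} requires) and with $\|(w_i^{\pm})^{(k)}\|_\infty\ll_k\delta^{-k}$, so that
\[
w_1^{-}\!\Big(\tfrac{r_1 n+f_1}{x_1}\Big)\,w_2^{-}\!\Big(\tfrac{r_2 n+f_2}{x_2}\Big)\;\le\;\mathbf{1}_{(x/2,\,x]}(n)\;\le\;w_1^{+}\!\Big(\tfrac{r_1 n+f_1}{x_1}\Big)\,w_2^{+}\!\Big(\tfrac{r_2 n+f_2}{x_2}\Big)
\]
for every $n$, with the three quantities coinciding except when $n$ lies in one of two intervals of length $\ll\delta x$ around $x/2$ and $x$. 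Summing against $d(r_1 n+f_1)d(r_2 n+f_2)$ then sandwiches the sum in question between $D^{-}$ and $D^{+}$, where $D^{\pm}:=D(x_1,x_2)$ is formed with the weights $w_i^{\pm}$.

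The next step is to feed $D^{\pm}$ into Theorem~\ref{thm: main theorem, smooth version}. This cannot be done as a black box, because its implicit constant depends on the weights and ours now vary with $\delta$; one must revisit its proof and track the dependence on the smoothness scale. The weights enter the spectral side only through Bessel‑type integral transforms, and writing $w_i^{\pm}$ through their Mellin transforms $\widetilde{w_i^{\pm}}(s)\ll_A\min(1,|s|^{-1},\delta(\delta|s|)^{-A})$ while using the (at most polynomial) growth in $|s|$ of the error for the model weights $y^{-s}$ gives
\[
D^{\pm}=M^{\pm}+\BigO{ \delta^{-1/2}\,r_0\,(r_1 r_2 x)^{1/2+\theta+\varepsilon} },
\]
in fact with the sharper $\gcd$‑refined dependence on $r_0,r_1,r_2,h$ (a factor $(r_0 r_1 r_2,h)^{\theta}$ and an adjusted power of $r_0$) coming out of a careful accounting of the Kloosterman sums, whose moduli carry $\gcd(\cdot,h)$.

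The main terms are handled elementarily: $M^{\pm}$ is the $\xi$‑integral of $w_1^{\pm}w_2^{\pm}$ against $P(\log(r_1\xi+f_1),\log(r_2\xi+f_2))$; replacing the weights by $\mathbf{1}_{(x/2,\,x]}$ costs only $\BigO{\delta x^{1+\varepsilon}}$ (so in particular $M^{+}-M^{-}\ll\delta x^{1+\varepsilon}$), and then integrating the remaining polynomial in $\log(r_1\xi+f_1)$ and $\log(r_2\xi+f_2)$ over $(x/2,x]$ and invoking $f_1\ll(r_1 x)^{1-\varepsilon}$, $f_2\ll(r_2 x)^{1-\varepsilon}$ to absorb the lower‑order terms reduces $M^{\pm}$ to $x\,P(\log x)$ with $P$ the asserted quadratic, up to a lower‑order error that the hypotheses on $f_1,f_2$ render harmless.

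Combining the three displays gives
\[
\sum_{\frac x2<n\le x}d(r_1 n+f_1)d(r_2 n+f_2)=xP(\log x)+\BigO{\delta x^{1+\varepsilon}+\delta^{-1/2}r_0(r_1 r_2 x)^{1/2+\theta+\varepsilon}},
\]
and choosing $\delta$ to balance the two error terms — roughly $\delta\asymp r_0^{2/3}(r_1 r_2)^{1/3}x^{-1/3+\varepsilon}$, up to the $\theta$‑ and $\gcd$‑bookkeeping — yields the stated error $(r_0 r_1 r_2,h)^{\theta}r_0^{2/3+\theta}(r_1 r_2)^{1/3}x^{2/3+\varepsilon}$; the hypothesis $(r_0 r_1 r_2,h)h\ll r_0^{1/3}(r_1 r_2)^{5/3}x^{1/3-\varepsilon}$ is precisely what ensures this choice of $\delta$ is admissible (in particular $\delta\le 1$). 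The genuine obstacle, I expect, is the second step: extracting from the proof of Theorem~\ref{thm: main theorem, smooth version} a dependence on $\delta$ sharp enough to produce the $\delta^{-1/2}$ — it is exactly this factor that degrades the exponent from $\tfrac12$ to $\tfrac23$ — while simultaneously keeping the dependence on $r_0,r_1,r_2$ and on $\gcd(\cdot,h)$ under control.
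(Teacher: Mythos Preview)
Your overall strategy --- smooth the sharp cut-off with a parameter $\delta$, track the $\delta$-dependence through the proof of the smooth theorem, then optimise --- is exactly what the paper does (the paper's parameter is called $\Omega$, and it is carried through the analysis from the outset rather than introduced via Mellin transforms of the weights). So the architecture is right.

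However, the intermediate bound you claim,
\[
D^{\pm}=M^{\pm}+\BigO{ \delta^{-1/2}\,r_0\,(r_1 r_2 x)^{1/2+\theta+\varepsilon} },
\]
has the wrong shape, and if taken literally would not give the theorem: balancing $\delta x$ against $\delta^{-1/2}(r_1 r_2 x)^{1/2+\theta}$ yields an error of size $x^{2/3+2\theta/3}$, not $x^{2/3}$. The point you are missing is that the $\delta^{-1/2}$ loss and the $\theta$-loss do \emph{not} combine multiplicatively. What the paper actually proves (its bounds (3.4)--(3.5)) is of the form
\[
R\ll r_0(r_2 x_1)^{1/2+\varepsilon}\Bigl(\delta^{-1/2}+\text{(a $\theta$-dependent term independent of $\delta$)}\Bigr).
\]
The $\delta^{-1/2}$ piece comes from the oscillatory range $N_0^-\ll N\ll N_0^+$ of the Voronoi dual variable, where the exceptional spectrum contributes negligibly; the $\theta$-piece comes from the non-oscillatory range $N\ll N_0^-$, where the weight derivatives play no role. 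Balancing $\delta x$ against $r_0(r_2x_1)^{1/2}\delta^{-1/2}$ is what gives $\delta\asymp r_0^{2/3}(r_1r_2)^{1/3}x^{-1/3}$ and the exponent $2/3$.

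The second, more substantial, gap is that the $\theta$-piece is not a single expression: the paper needs \emph{two} different estimates for the exceptional-eigenvalue contribution --- one (via Lemma~\ref{lemma: estimates for the exceptional eigenvalues}) good for small $h$, one (via Lemma~\ref{lemma: estimates for Fourier coefficients, I}) good for large $h$ --- and applies them on the two ranges of $(r_0r_1r_2,h)h$ separated by the threshold $r_0^{-1/3}(r_1r_2)^{4/3}x^{2/3}(r_0^2r_1^2r_2/x_1)^{4\theta}$. The hypothesis $(r_0r_1r_2,h)h\ll r_0^{1/3}(r_1r_2)^{5/3}x^{1/3-\varepsilon}$ in the theorem is exactly what makes the second estimate good enough on the upper range; it is not merely an admissibility condition for $\delta\le 1$. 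Your write-up hides all of this under ``$\theta$- and $\gcd$-bookkeeping'', but it is the heart of the matter.
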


Correlations of a much more general type have been investigated by Matthiesen \cite{Mat12}, but the methods used there don't apply to our case and don't give power savings in the error term.
Similar problems, where the divisor functions are replaced by Fourier coefficients of automorphic forms, have been studied as well (see e.g. \cite{Blo04}).
In particular, for Fourier coefficients of holomorphic cusp forms, Pitt \cite[Theorem 1.4]{Pitt13} was able to prove an analogue of our Theorem \ref{thm: main theorem, smooth version} for \(r_1\), \(r_2\) squarefree and \( f_1 = f_2 = -1 \).
Unfortunately, his method relies on Jutila's variant of the circle method and is not applicable to our case.

The proof of Theorems \ref{thm: main theorem, smooth version} and \ref{thm: main theorem} follows standard lines:
We split one of the divisor functions and use the Voronoi summation formula to deal with the divisor sum in arithmetic progressions.
The main difficulty lies in the handling of the sum of Kloosterman sums entering the stage at this point.
In a simplified form, we are faced with a sum roughly of the shape
\[ \sum_\twoln{c}{ (c, r_2) = 1 } \frac{ S(1 - r_1 \overline{r_2}, 1; r_1 c) }{r_1c} F(r_1c), \]
where \( F \) is some weight function, and where \( \overline{r_2} \) is understood to be mod \(c\).
We could bound the Kloosterman sums individually using Weil's bound, and the resulting error terms in our theorems would be of a size comparable to \eqref{eqn: result of DFI}.
Our aim however is to use spectral methods to get results beyond that.

If \(r_1\) and \(r_2\) are coprime, we can use the Kuznetsov formula with an appropriate choice of cusps to do that.
Otherwise, it is not directly clear how the Kuznetsov formula might be put into use here.
In this article we want to show that nevertheless this is possible.
We solve the problem by splitting the variable \( r_1 = tv \) into a factor \(t\), which is coprime to \(r_2\), and a factor \(v\), which contains only the same prime factors as \(r_2\).
By twisted multiplicativity of Kloosterman sums we have
\[ \frac{ S(1 - r_1 \overline{r_2}, 1; r_1c) }{r_1 c} = \frac{ S( \overline{tc}, \overline{tc}; v ) }v \frac{ S\left( \vcent{ r_2 - r_1, \overline{ v^2 r_2}; t c } \right) }{t c}, \]
where now all the inverses are understood to be modulo the respective modulus of the Kloosterman sum.
Following an idea of Blomer and Mili\'{c}evi\'{c} \cite{BM15}, we separate the variable \(c\) occuring in the first factor by exploiting the orthogonality of Dirchlet characters, namely as follows
\[ \frac{ S( \overline{t c}, \overline{t c}; v ) }v = \frac1{ \varphi(v) } \sum_{ \chi \bmod v } \chi(t c) \hat S_v(\chi), \quad \text{with} \quad \hat S_v(\chi) := \sum_\twoln{y \smod v}{ (y, v) = 1 } \overline\chi(y) \frac{ S( \overline y, \overline y; v ) }v, \]
where the left sum runs over all Dirichlet characters mod \(v\).
This way we are led to a sum of Kloosterman sums twisted by a Dirichlet character, which we can treat by spectral methods.

\section{Preliminaries}

Note that \( \varepsilon \) always stands for some positive real number, which can be chosen arbitrarily small.
However, it need not be the same on every occurrence, even if it appears in the same equation.
To avoid confusion we also want to recall that as usual \( e(z) := e^{2\pi i z} \), and that
\[ S(m, n; c) := \sum_\twoln{a \smod c}{ (a, c) = 1 } \emod{ ma + n \overline{a} }c \quad \text{and} \quad c_q(n) := \sum_\twoln{a \smod q}{ (a, q) = 1 } \emod{an}q, \]
which are the usual notations for Kloosterman sums and Ramanujan sums.

\subsection{The Voronoi summation formula and Bessel functions}

Using the well-known Voronoi formula for the divisor function (see \cite[Chapter 4.5]{IK04} or \cite[Theorem 1.6]{Jut87}) and the identity
\[ \sum_\twoln{n = 1}{ n \equiv b \smod c }^\infty d(n) f(n) = \frac1c \sum_{d \mid c} \sum_\twoln{\ell \smod d}{ (\ell, d) = 1 } \emod{-b\ell}d \sum_{n = 1}^\infty d(n) f(n) \emod{n\ell}d, \]
it is not hard to show the following summation formula for the divisor function in arithmetic progressions:

\begin{thm}\label{thm: Voronoi summation for d(n) in arithmetic progressions}
  Let \(b\) and \( c \geq 1 \) be integers.
  Let \( f : (0, \infty) \rightarrow \mathbb{R} \) be smooth and compactly supported.
  Then
  \begin{align*}
    \sum_{ n \equiv b \smod c } d(n) f(n) &= \frac1c \int \! \lambda_{b, c}(\xi) f(\xi) \, d\xi \\
      &\qquad -\frac{2 \pi}c \sum_{d \mid c} \sum_{n = 1}^\infty d(n) \frac{ S(b, n; d) }d \int \! Y_0\left( \frac{4 \pi}d \sqrt{n \xi} \right) f(\xi) \, d\xi \\
      &\qquad +\frac4c \sum_{d \mid c} \sum_{n = 1}^\infty d(n) \frac{ S(b, -n; d) }d \int \! K_0\left( \frac{4 \pi}d \sqrt{n \xi} \right) f(\xi) \, d\xi,
  \end{align*}
  with
  \[ \lambda_{b, c}(\xi) := \sum_{d \mid c} \frac{ c_d(b) }d (\log\xi + 2 \gamma - 2 \log d). \]
\end{thm}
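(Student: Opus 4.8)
The plan is to derive the formula by combining the two inputs already on the table: the elementary decomposition of the congruence condition displayed just above, and the classical Voronoi summation formula for the divisor function carrying an additive twist. The decomposition comes from the additive detection
\[ \mathbf 1_{n \equiv b \smod c} = \frac1c \sum_{a \smod c} \emod{a(n - b)}c \]
by collecting the values of \(a\) according to \(d := c/(a, c)\), so that \(a/c\) ranges over the reduced fractions \(\ell/d\) with \(d \mid c\) and \((\ell, d) = 1\). This reduces everything to evaluating, for each such pair \((d, \ell)\), the twisted divisor sum \(\sum_{m \geq 1} d(m) f(m) \emod{m\ell}d\).

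To the latter I would apply the Voronoi formula for \(d(m)\) in the twisted form provided by the references cited above: for \((\ell, d) = 1\),
\begin{align*}
  \sum_{m=1}^\infty d(m) f(m) \emod{m\ell}d &= \frac1d \int \! \left( \log\xi + 2\gamma - 2\log d \right) f(\xi) \, d\xi \\
    &\qquad - \frac{2\pi}d \sum_{n=1}^\infty d(n) \emod{- \overline\ell\, n}d \int \! Y_0\!\left( \frac{4\pi}d \sqrt{n\xi} \right) f(\xi) \, d\xi \\
    &\qquad + \frac4d \sum_{n=1}^\infty d(n) \emod{\overline\ell\, n}d \int \! K_0\!\left( \frac{4\pi}d \sqrt{n\xi} \right) f(\xi) \, d\xi,
\end{align*}
where \(\overline\ell\) is the inverse of \(\ell\) modulo \(d\) (if the references only record the untwisted case, the twisted version follows from the functional equation of the Estermann zeta function \(\sum_m d(m) \emod{m\ell}d m^{-s}\)). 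Substituting this into the decomposition, I can carry out the sum over \(\ell\) in each of the three pieces. In the main term one gets \(\sum_\twoln{\ell \smod d}{(\ell, d) = 1} \emod{-b\ell}d = c_d(b)\), so that contribution assembles into \(\frac1c \int \lambda_{b,c}(\xi) f(\xi) \, d\xi\); in the two Bessel terms one gets
\[ \sum_\twoln{\ell \smod d}{(\ell, d) = 1} \emod{-b\ell - \overline\ell\, n}d = S(-b, -n; d) = S(b, n; d) \]
and
\[ \sum_\twoln{\ell \smod d}{(\ell, d) = 1} \emod{-b\ell + \overline\ell\, n}d = S(-b, n; d) = S(b, -n; d), \]
using the symmetry \(S(-m, -n; c) = S(m, n; c)\) (and \(c_d(-b) = c_d(b)\) for the Ramanujan sum). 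Putting the three contributions back together produces exactly the asserted identity.

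The whole argument is bookkeeping, and the only point that really needs checking is that the rearrangements are legitimate — the Voronoi formula delivers a series in \(n\) that one then wants to push past the finite sums over \(d\) and \(\ell\). This is clear because \(f\) is smooth and compactly supported: the substitution \(\xi \mapsto \xi^2\) together with repeated integration by parts shows that \(\int Y_0(4\pi\sqrt{n\xi}/d)\, f(\xi)\, d\xi \ll_{A, f} (d/\sqrt n)^A\) for every \(A > 0\), while the \(K_0\)-integral decays exponentially in \(\sqrt n/d\), so every series in sight converges absolutely. The remaining care is purely notational — keeping track of the inverses and signs when the additive twists \(\emod{\pm \overline\ell\, n}d\) turn into Kloosterman sums, which is handled by the change of variable \(\ell \mapsto -\ell\) — and there is no deeper obstacle.
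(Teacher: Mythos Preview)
Your proposal is correct and follows exactly the approach the paper indicates: it combines the displayed identity (the additive-character decomposition of the congruence condition) with the twisted Voronoi formula for \(d(n)\), then collects the \(\ell\)-sums into Ramanujan and Kloosterman sums. The paper gives no further details beyond this sketch, so your write-up is in fact more explicit than what appears there.
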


Concerning the Bessel functions appearing in the above Theorem, we want to sum up some well-known facts.
We know that
\begin{align} \label{eqn: bounds for the K-Bessel function}
  K_0(\xi) \ll | \log\xi | \quad \text{for} \quad \xi \ll 1, \quad \text{and} \quad K_0 \ll \frac1{ e^\xi \sqrt{\xi} } \quad \text{for} \quad \xi \gg 1,
\end{align}
and that for \( \mu \geq 1 \),
\[ K_0^{ (\mu) }(\xi) \ll \frac1{ \xi^\mu } \quad \text{for} \quad \xi \ll 1, \quad \text{and} \quad K_0^{ (\mu) } \ll \frac1{ e^\xi \sqrt{\xi} } \quad \text{for} \quad \xi \gg 1. \]
Regarding the \(Y\)-Bessel function, we have for \( \nu \geq 1 \) and \( \xi \ll 1 \),
\[ Y_0(\xi) \ll | \log\xi |, \quad Y_\nu(\xi) \ll \frac1{ \xi^\nu }, \quad \text{and} \quad Y_0^{ (\mu) } \ll \frac1{ \xi^\mu }, \quad Y_\nu^{ (\mu) } \ll \frac1{ \xi^{\nu + \mu} } \quad \text{for} \quad \mu \geq 1. \]
For \( \nu \geq 0 \) and \( \xi \gg 1 \), it is known that
\[ Y_\nu^{ (\mu) }(\xi) \ll \frac1{ \sqrt{\xi} } \quad \text{for} \quad \mu \geq 0. \]

From the recurrence relation
\[ \left( \xi^\nu Y_\nu(\xi) \right)' = \xi^\nu Y_{\nu - 1}(\xi), \]
we get the identity
\begin{align}\label{eqn: consequence of the recurrence relations}
  \int \! Y_0\left( \frac{4\pi}c \sqrt{h \xi} \right) f(\xi) \, d\xi = \left( \frac{-2c}{ 4\pi \sqrt{h} } \right)^\nu \int \! \xi^\frac\nu2 Y_\nu\left( \frac{4\pi}c \sqrt{h \xi} \right) \frac{\partial^\nu f}{\partial \xi^\nu}(\xi) \, d\xi,
\end{align}
which is useful when estimating the sizes of the Bessel transforms occuring in the Voronoi summation formula.

The \(Y_\nu\)-Bessel functions oscillate for large values, and to make use of this behaviour we state the following lemma.

\begin{lemma} \label{lemma: Bessel functions as oscillating functions}
  For any \( \nu \geq 0 \) there is a smooth function \( v_Y: (0, \infty) \rightarrow \mathbb{C} \) such that
  \[ Y_\nu(\xi) = 2\Re\left( \e\left( \frac \xi{2\pi} \right) v_Y\left( \frac \xi\pi \right) \right), \]
  and such that for any \( \mu \geq 0 \),
  \[ v_Y^{ (\mu) } \ll \frac1{ \xi^{\mu + \frac12} } \quad \text{for} \quad \xi \gg 1. \]
\end{lemma}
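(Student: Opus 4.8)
The plan is to deduce the lemma from the classical behaviour of the Hankel function $H_\nu^{(1)} = J_\nu + i Y_\nu$. Since $Y_\nu$ is real-valued on $(0, \infty)$, we have $Y_\nu(\xi) = \Im H_\nu^{(1)}(\xi) = \Re\left( -i H_\nu^{(1)}(\xi) \right)$; and because $\e\left( \tfrac{\xi}{2\pi} \right) = e^{i\xi}$, this suggests defining
\[
  v_Y(t) := -\frac{i}{2}\, e^{-i\pi t}\, H_\nu^{(1)}(\pi t).
\]
This function is smooth on $(0, \infty)$, since $H_\nu^{(1)}$ is, and it satisfies $\e\left( \tfrac{\xi}{2\pi} \right) v_Y\left( \tfrac{\xi}{\pi} \right) = -\tfrac{i}{2}\, H_\nu^{(1)}(\xi)$, whence $2 \Re\left( \e\left( \tfrac{\xi}{2\pi} \right) v_Y\left( \tfrac{\xi}{\pi} \right) \right) = \Re\left( -i H_\nu^{(1)}(\xi) \right) = Y_\nu(\xi)$. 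Thus the required identity holds for this choice of $v_Y$, and only the decay of $v_Y$ and of its derivatives remains to be checked.

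For that I would invoke the standard asymptotic factorisation: for $\xi \gg 1$ one has $H_\nu^{(1)}(\xi) = \sqrt{2/(\pi\xi)}\; e^{i(\xi - \nu\pi/2 - \pi/4)}\, \omega_\nu(\xi)$, where $\omega_\nu$ is smooth with $\omega_\nu^{(\mu)}(\xi) \ll \xi^{-\mu}$ for every $\mu \geq 0$, the implicit constants depending on $\nu$ and $\mu$. This is entirely classical and may be quoted, say, from Watson's treatise or from the NIST Handbook of Mathematical Functions, Chapter~10, which provide asymptotic expansions for $H_\nu^{(1)}$ and all of its derivatives with explicit remainder bounds; alternatively it follows from the integral representation
\[
  \omega_\nu(\xi) = \frac{1}{\Gamma\left( \nu + \tfrac12 \right)} \int_0^\infty e^{-s}\, s^{\nu - \frac12} \left( 1 + \frac{i s}{2\xi} \right)^{\nu - \frac12} \, ds \qquad (\nu \geq 0),
\]
by differentiating under the integral sign, the key point being that each $\xi$-derivative of $\left( 1 + \tfrac{i s}{2\xi} \right)^{\nu - 1/2}$ gains a decay factor $\ll 1/\xi$ at the cost of at most polynomial growth in $s$, which is absorbed by the weight $e^{-s}$.

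Granting this, substituting the factorisation into the definition of $v_Y$ gives $v_Y(t) = c\, t^{-1/2}\, \omega_\nu(\pi t)$ for $t \gg 1$ with a constant $c$ depending only on $\nu$; the Leibniz rule together with $\left( t^{-1/2} \right)^{(k)} \ll_k t^{-1/2 - k}$ and $\omega_\nu^{(j)}(\pi t) \ll_{\nu, j} t^{-j}$ then yields $v_Y^{(\mu)}(t) \ll_{\nu, \mu} t^{-\mu - 1/2}$ for $t \gg 1$, as claimed. The only genuine ingredient is the control of the derivatives of $\omega_\nu$; as this is standard, I would cite it rather than reproduce the computation, the remainder of the argument being a routine verification.
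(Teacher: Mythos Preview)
Your proof is correct, and it takes a genuinely different route from the paper. The paper proceeds from the integral representation
\[
  Y_0(\xi) = -\frac{1}{\pi} \int_0^\infty \cos\left( \frac{x}{2\pi} + \frac{\pi \xi^2}{2x} \right) \frac{dx}{x}
\]
(Gradshteyn--Ryzhik 3.871) and applies the variable substitution of Deshouillers--Iwaniec \cite[Lemma~4]{DI82b}, referring to \cite[Lemma~2.3]{Top15} for the remaining details. Your argument instead realises $Y_\nu$ as the imaginary part of the Hankel function $H_\nu^{(1)}$, defines $v_Y$ in one stroke as $v_Y(t) = -\tfrac{i}{2} e^{-i\pi t} H_\nu^{(1)}(\pi t)$, and reads off the derivative bounds from the classical amplitude--phase factorisation of $H_\nu^{(1)}$ (for which you even supply a usable integral formula). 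This is cleaner in that it treats all $\nu \geq 0$ uniformly and avoids the DI substitution entirely; the paper's approach has the virtue of staying within the oscillatory-integral framework familiar from the Kloosterman-sum literature, but at the cost of deferring the actual computation to external references. Either way the content is the same classical fact, and your version would serve perfectly well here.
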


\begin{proof}
  This can be shown by using the integral representation (see \cite[3.871]{GR07})
  \[ Y_0(\xi) = -\frac1\pi \int_0^\infty \! \cos\left( \frac x{2\pi} + \frac{\pi \xi^2}{2x} \right) \, \frac{dx}x, \]
  and applying the same variable substitution as in \cite[Lemma 4]{DI82b}.
  See \cite[Lemma 2.3]{Top15} for more details.
\end{proof}

\subsection{The Hecke congruence subgroup and Kloosterman sums}\label{subsection: The Kuznetsov trace formula and the Large sieve inequalities}

Here and in the following sections we will go through some results from the theory of automorphic forms.
A general description of the spectral theory of automorphic forms can be found for instance in \cite{Iwa02} or \cite[Chapters 14--16]{IK04}, while \cite{DFI02} gives a very nice introduction to Maa{\ss} forms of higher weight with arbitrary nebentypus.

Besides the Kuznetsov trace formula, our main tools are the large sieve inequalities, which were proven by Deshouillers and Iwaniec \cite{DI82a} with respect to Hecke congruence subgroups.
Their results can be extended to our specific setting, the details of which have luckily been worked out by Drappeau \cite{Dra15}.
Finally, we also want to cite \cite{BHM07b} as a reference, where we borrow large parts of the notation.

Let \(q\) be some positive integer, let \( \kappa \in \{0, 1\} \), and let \(\chi\) be a character mod \(q_0\), with \( q_0 \mid q \), such that
\[ \chi(-1) = (-1)^\kappa. \]
Let \( \Gamma := \Gamma_0(q) \) be the Hecke congruence subgroup of level \(q\).
The character \(\chi\) naturally extends to \(\Gamma\) by setting
\[ \chi(\gamma) := \chi(d) \quad \text{for} \quad \begin{pmatrix} a & b \\ c & d \end{pmatrix} \in \Gamma. \]
Every cusp \( \mathfrak{a} \) of \(\Gamma\) is equivalent to some \( \frac uw \) with \( (u, w) = 1 \) and \( w \mid q \).
It is called singular if
\[ \chi(\gamma) = 1 \quad \text{for all } \gamma \in \Gamma_\mathfrak{a}, \]
where \( \Gamma_\mathfrak{a} \) is the stabilizer of \( \mathfrak{a} \).

For any cusp \( \mathfrak{a} \) of \(\Gamma\) we can choose \( \sigma_\mathfrak{a} \in \operatorname{SL}_2( \mathbb{R} ) \) such that
\[ \sigma_\mathfrak{a} \infty = \mathfrak{a} \quad \text{and} \quad { \sigma_\mathfrak{a} }^{-1} \Gamma_\mathfrak{a} \sigma_\mathfrak{a} = \Gamma_\infty. \]
Given two singular cusps \( \mathfrak{a} \), \( \mathfrak{b} \), we define for \( n, m \in \mathbb{Z} \) the Kloosterman sum
\[ S_\mathfrak{ab}(m, n; \gamma) := \sum_{ \delta \bmod \gamma \mathbb{Z} } \overline\chi\left( \sigma_\mathfrak{a} \begin{pmatrix} \alpha & \beta \\ \gamma & \delta \end{pmatrix} { \sigma_\mathfrak{b} }^{-1} \right) \e\left( m \frac\alpha\gamma + n \frac\delta\gamma \right), \]
where the sum runs over all \( \delta \bmod \gamma\mathbb{Z} \), for which there exist some \(\alpha\), \(\beta\) such that
\[ \begin{pmatrix} \alpha & \beta \\ \gamma & \delta \end{pmatrix} \in { \sigma_\mathfrak{a} }^{-1} \Gamma \sigma_\mathfrak{b}. \]
Note that this definition depends on the chosen scaling matrices \( \sigma_\mathfrak{a} \) and \( \sigma_\mathfrak{b} \).

As an example, for \( \mathfrak{a} = \mathfrak{b} = \infty \) and the choice \( \sigma_\infty = 1 \), the sum is non-empty exactly when \( q \mid c \) and in this case it reduces to the usual twisted Kloosterman sum
\[ S_{\infty \infty}(m, n; c) = S_\chi(m, n; c) := \sum_\twoln{a \smod c}{ (a, c) = 1 } \chi(a) \emod{ ma + n \overline a }c. \]
It is well-known that for any prime \(p\) this sum can be bound by
\[ S_\chi(m, n; p) \leq 2 (m, n, p)^\frac12 p^\frac12. \]
However, for general modulus we have to account for the conductor of \(\chi\) as well, and in this case the following bound holds (see \cite[Theorem 9.2]{KL13})
\[ S_\chi(m, n; c) \ll (m, n, c)^\frac12 {q_0}^\frac12 c^{\frac12 + \varepsilon}. \]

Another important example is given for \(q\) having the form \( q = rs \) with \( (r, s) = 1 \) and \( q_0 \mid r \).
Consider the two singular cusps \(\infty\) and \( \frac1s\), together with the choice
\[ \sigma_\frac1s = \begin{pmatrix} \sqrt{r} & 1 \\ s \sqrt{r} & \sqrt{r}^{-1} \end{pmatrix}. \]
Now the sum \( S_{\infty \frac1s}(m, n; \gamma) \) is non-empty exactly when \(\gamma\) may be written as
\[ \gamma = \sqrt{r} s c, \quad \text{with} \quad c \in \mathbb{Z} \setminus \{0\}, \quad (c, r) = 1, \]
and in this case we have
\[ S_{\infty \frac1s}(m, n; \gamma) = \e\left( n \frac{ \overline{s} }r \right) \overline\chi(c) S\left( m, n \overline r; sc \right). \]

\subsection{Automorphic forms and their Fourier expansions} \label{subsection: Automorphic forms and their Fourier expansions}

By \( \mathcal{S}_k(q, \chi) \) we denote the finite-dimensional Hilbert space of holomorphic cusp forms of weight \( k \equiv \kappa \bmod 2 \) with respect to \( \Gamma_0(q) \) and with nebentypus \(\chi\).
Let \( \theta_k(q, \chi) \) be its dimension.
For each \(k\), we choose an orthonormal Hecke eigenbasis \( f_{j, k} \), \( 1 \leq j \leq \theta_k(q, \chi) \).
Then the Fourier expansion of \( f_{j, k} \) around a singular cusp \( \mathfrak{a} \) (with associated scaling matrix \( \sigma_\mathfrak{a} \)) is given by
\[ i(\sigma_\mathfrak{a}, z)^{-k} f_{j, k}(\sigma_\mathfrak{a} z) = \sum_{n = 1}^\infty \psi_{j, k}(n, \mathfrak{a}) (4\pi n)^\frac k2 e(nz), \]
where we have set
\[ i(\gamma, z) := cz + d \quad \text{for} \quad \gamma = \begin{pmatrix} a & b \\ c & d \end{pmatrix}. \]

Next, let \( \mathcal{L}^2(q, \chi) \) be the space of Maa{\ss} forms of weight \(\kappa\) with respect to \( \Gamma_0(q) \) and with nebentypus \(\chi\), and let \( \mathcal{L}_0^2(q, \chi) \subset \mathcal{L}^2(q, \chi) \) be its subspace of Maa{\ss} cusp forms.
Let \( u_j \), \( j \geq 1 \), run over an orthonormal Hecke eigenbasis of \( \mathcal{L}_0^2(q, \chi) \), with the corresponding real eigenvalues \( \lambda_1 \leq \lambda_2 \leq \ldots \).
We can assume each \(u_j\) to be either even or odd.
We set \( {t_j}^2 = \lambda_j - \frac14 \), where we choose the sign of \( t_j \) so that \( i t_j \geq 0 \) if \( \lambda_j < \frac14 \), and \( t_j \geq 0 \) if \( \lambda_j \geq \frac14 \).
Then the Fourier expansions of these functions around a singular cusp \( \mathfrak{a} \) is given by
\[ j(\sigma_\mathfrak{a}, z)^{-\kappa} u_j(\sigma_\mathfrak{a} z) = \sum_{n \neq 0} \rho_j( n, \mathfrak{a} ) W_{ \frac n{ |n| } \frac\kappa2, i t_j  }(4 \pi |n| y) e(nx), \]
where
\[ j(\gamma, z) := \frac{ c z + d }{ |cz + d| } \quad \text{for} \quad \gamma = \begin{pmatrix} a & b \\ c & d \end{pmatrix}. \]
The Selberg eigenvalue conjecture says that \( \lambda_1 \geq \frac14 \), which would imply that all \( t_j \) are real and non-negative.
While for \( \kappa = 1 \) this is known to be true, for \( \kappa = 0 \) it is still an open question.
The eigenvalues with \( 0 < \lambda_j < \frac14 \) as well as the corresponding values \( t_j \) are called exceptional, and lower bounds for these exceptional \( \lambda_j \) imply upper bounds for the corresponding \( i t_j \).
Let \( \theta \in [0, \infty) \) be such that \( i t_j \leq \theta \) for all exceptional \( t_j \) uniformly for all levels \(q\) and any nebentypus; by the work of Kim and Sarnak \cite{Kim03} we know that we can choose
\[ \theta = \frac7{64}. \]

The orthogonal complement to \( \mathcal{L}_0^2(q, \chi) \) in \( \mathcal{L}^2(q, \chi) \) is the Eisenstein spectrum \( \mathcal{E}(q, \chi) \) (plus possibly the space of constant functions if \(\chi\) is trivial).
It can be described explicitly by means of the Eisenstein series \( E_\mathfrak{c}\left( z; \frac12 + it \right) \), where \( \mathfrak{c} \) is a singular cusp and \( t \in \mathbb{R} \).
The Fourier expansion of these Eisenstein series around the cusp \( \mathfrak{a} \) is given by
\begin{multline*}
  j(\sigma_\mathfrak{a}, z)^{-\kappa} E_\mathfrak{c}\left(\sigma_\mathfrak{a} z; \frac12 + it \right) = c_{ \mathfrak{c}, 1 }(t) y^{\frac12 + it} + c_{ \mathfrak{c}, 2 }(t) y^{\frac12 - it} \\
    + \sum_{n \neq 0} \varphi_{ \mathfrak{c}, t }( n, \mathfrak{a} ) W_{ \frac n{ |n| } \frac\kappa2, it  }(4 \pi |n| y) e(nx).
\end{multline*}

Note that by the choice of our basis, we have that
\[ | \rho_j(-n, \infty) | = |t_j|^\kappa | \rho_j(n, \infty) | \quad \text{for} \quad n \geq 1. \]
Furthermore, since all Eisenstein series are even, the same is true for their Fourier coefficients, namely
\[ | \varphi_{\mf c, t}(-n, \infty) | = |t|^\kappa | \varphi_{\mf c, t}(n, \infty) | \quad \text{for} \quad n \geq 1. \]

\subsection{The Kuznetsov trace formula}

With the whole notation set up, we can now formulate the famous Kuznetsov trace formula, which in our case reads as follows.

\begin{thm}\label{thm: Kuznetsov trace formula}
  Let \( f : (0, \infty) \rightarrow \mathbb{C} \) be smooth with compact support, let \( \mathfrak{a} \), \( \mathfrak{b} \) be singular cusps, and let \(m\), \(n\) be positive integers.
  Then
  \begin{align*}
      \sum_\gamma \frac{ S_\mathfrak{ab}(m, n; \gamma) }\gamma &f\left( 4\pi \frac{ \sqrt{mn} }\gamma \right) = \sum_{j = 1}^\infty \overline{ \rho_j }(m, \mathfrak{a}) \rho_j(n, \mathfrak{b}) \frac{ \sqrt{mn} }{ \cosh(\pi t_j) } \tilde f( t_j ) \\
        &+ \sum_{ \mathfrak{c} \text{ sing.} } \frac1{4\pi} \int_{-\infty}^\infty \! \overline{ \varphi_{ \mathfrak{c}, t } } \left( m, \mathfrak{a} \right) \varphi_{\mathfrak{c}, t } \left( n, \mathfrak{b} \right) \frac{ \sqrt{mn} }{ \cosh(\pi t) } \tilde f(t) \, dt \\
        &+ \sum_\twoln{ k \equiv \kappa \smod 2, \,\, k > \kappa }{ 1 \leq j \leq \theta_k(q, \chi) } (k - 1)! \, \overline{ \psi_{j, k} }(m, \mathfrak{a}) \psi_{j, k}(n, \mathfrak{b}) \sqrt{mn} \dot f(k),
    \intertext{and}
      \sum_\gamma \frac{ S_\mathfrak{ab}(m, -n; \gamma) }\gamma &f\left( 4\pi \frac{ \sqrt{mn} }\gamma \right) = \sum_{j = 1}^\infty \overline{ \rho_j }(m, \mathfrak{a}) \rho_j(-n, \mathfrak{b}) \frac{ \sqrt{mn} }{ \cosh(\pi t_j) } \check f( t_j ) \\
        &+ \sum_{ \mathfrak{c} \text{ sing.} } \frac1{4\pi} \int_{-\infty}^\infty \! \overline{ \varphi_{ \mathfrak{c}, t } } \left( m, \mathfrak{a} \right) \varphi_{\mathfrak{c}, t } \left( -n, \mathfrak{b} \right) \frac{ \sqrt{mn} }{ \cosh(\pi t) } \check f(t) \, dt,
  \end{align*}
  where \(\gamma\) runs over all positive real numbers for which \( S_{ \mathfrak{a} \mathfrak{b} }(m, n; \gamma) \) is non-empty, and where the Bessel transforms are defined by
  \begin{align*}
    \tilde f(t) &= \frac{2\pi i t^\kappa}{ \sinh(\pi t) } \int_0^\infty \! \left( J_{2it}(\xi) - (-1)^\kappa J_{-2it}(\xi) \right) f(\xi) \, \frac{d\xi}\xi, \\
    \check f(t) &= 8 i^{-\kappa} \cosh(\pi t) \int_0^\infty \! K_{2it}(\xi) f(\xi) \, \frac{d\xi}\xi, \\
    \dot f(k) &= 4 i^k \int_0^\infty \! J_{k - 1}(\xi) f(\xi) \, \frac{d\xi}\xi.
  \end{align*}
\end{thm}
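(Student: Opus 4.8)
The plan is to derive the formula from the spectral theory of Poincar\'e series, by computing one inner product in two different ways. For a singular cusp $\mathfrak{a}$, a positive integer $m$, and $s \in \mathbb{C}$ with $\Re(s)$ large, introduce the weight-$\kappa$ Poincar\'e series
\[ P_{m, \mathfrak{a}}(z; s) := \sum_{\gamma \in \Gamma_\mathfrak{a} \backslash \Gamma} \overline{\chi}(\gamma)\, \overline{j({\sigma_\mathfrak{a}}^{-1}\gamma, z)}^{\kappa}\, \left( \Im {\sigma_\mathfrak{a}}^{-1}\gamma z \right)^{s}\, \e\left( m\, \Re {\sigma_\mathfrak{a}}^{-1}\gamma z \right), \]
and the analogous series $P_{n, \mathfrak{b}}(\,\cdot\,; s')$ attached to $\mathfrak{b}$ and $n$. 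Both converge absolutely for $\Re(s), \Re(s') > 1$, lie in $\mathcal{L}^2(q, \chi)$, and their Fourier expansions at the singular cusps can be written explicitly in terms of Whittaker functions; this is the route through which the Fourier coefficients $\rho_j$, $\psi_{j, k}$ and $\varphi_{\mathfrak{c}, t}$ will enter.

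First I would compute $\langle P_{m, \mathfrak{a}}(\,\cdot\,; s),\, P_{n, \mathfrak{b}}(\,\cdot\,; s') \rangle$ geometrically, by unfolding the first Poincar\'e series against the second. The relevant double cosets in $\Gamma_\mathfrak{a} \backslash \Gamma / \Gamma_\mathfrak{b}$ are parametrised exactly by the admissible moduli $\gamma$ for which $S_{\mathfrak{ab}}(m, n; \gamma)$ is non-empty, and summing the exponential factors over the bottom-row data of the matrices in ${\sigma_\mathfrak{a}}^{-1}\Gamma\sigma_\mathfrak{b}$ with a given $\gamma$ produces precisely these generalized Kloosterman sums. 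The diagonal double coset, present only when $\mathfrak{a} = \mathfrak{b}$, needs a separate and standard treatment, while each remaining coset contributes an explicit archimedean integral which, upon pulling out the Kloosterman modulus $\gamma$, collapses to a Bessel-type integral. Letting $s, s' \to \frac12$ through analytic continuation turns this into the left-hand side $\sum_\gamma \gamma^{-1} S_{\mathfrak{ab}}(m, \pm n; \gamma)\, f\left( 4\pi\sqrt{mn}/\gamma \right)$ for an $f$ lying in the image of the relevant Bessel transform.

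The second computation is spectral. I would expand both Poincar\'e series along orthonormal Hecke bases of $\mathcal{L}_0^2(q, \chi)$, the spaces $\mathcal{S}_k(q, \chi)$ of holomorphic cusp forms of weight $k \equiv \kappa \bmod 2$ with $k > \kappa$, and the Eisenstein spectrum $\mathcal{E}(q, \chi)$ (the residual constants, when $\chi$ is trivial, contribute nothing to a positive Fourier coefficient), and then invoke the Parseval identity. By a further unfolding, the inner product of $P_{m, \mathfrak{a}}(\,\cdot\,; s)$ with a fixed automorphic form reduces to a single Fourier coefficient of that form times an integral of the schematic shape $\int_0^\infty W_{\pm\frac\kappa2,\, it}(4\pi m y)\, y^{s}\, \frac{dy}{y^2}$, a beta-type integral producing the normalising factors $1/\cosh(\pi t_j)$, $(k-1)!$ and $\sqrt{mn}$ together with the spectral-parameter dependence that becomes $\tilde f(t_j)$, $\dot f(k)$ and $\check f(t)$ after the $s \to \frac12$ limit and inversion. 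For the $-n$ variant one uses $\rho_j(-n, \mathfrak{b})$ and the $K$-Bessel transform $\check f$, and the holomorphic forms drop out there since their Fourier coefficients are supported on positive frequencies.

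The main obstacle is the passage from the restricted family of test functions naturally produced by this construction --- where the complex parameters are essentially pinned down and the weight is a fixed Bessel kernel --- to an \emph{arbitrary} smooth, compactly supported $f$. This rests on an inversion theorem for the Bessel transforms $f \mapsto \tilde f$, $f \mapsto \check f$, $f \mapsto \dot f$ (Sears--Titchmarsh-type inversion and its weight-$\kappa$ analogues), combined with an approximation argument justified by uniform convergence of both sides: on the geometric side one uses the Weil-type bound of \cite{KL13} for $S_{\mathfrak{ab}}$ together with the Bessel asymptotics recorded in the preliminaries, and on the spectral side a crude Weyl-law count for the number of $t_j$ and $k$ in a dyadic range (for which the large sieve inequalities of \cite{DI82a} are far more than enough). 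The remaining task --- pushing all of this through for non-trivial nebentypus $\chi$, weight $\kappa \in \{0, 1\}$, and arbitrary singular cusps $\mathfrak{a}$, $\mathfrak{b}$ with their scaling matrices $\sigma_\mathfrak{a}$, $\sigma_\mathfrak{b}$, rather than only $\mathfrak{a} = \mathfrak{b} = \infty$ with trivial character --- is precisely the technical bookkeeping carried out in \cite{DFI02} and \cite{Dra15}, to which I would defer for the details.
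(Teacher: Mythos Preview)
Your proposal is correct and follows essentially the same route as the paper. The paper's own proof is itself only a pointer to the standard Poincar\'e-series derivation in \cite[Chapter 16.4]{IK04} and \cite{Pro03}, with \cite[Proposition 5.2]{DFI02} singled out as the pre-trace identity to start from; you have simply unpacked that same argument in more detail and deferred to the same references for the bookkeeping with general cusps and nebentypus.
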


\begin{proof}
  The proof of these formulas can be done along standard lines, as described for instance in \cite[chapter 16.4]{IK04} (see also \cite{Pro03}).
  The extension to our setting and to general cusps poses no real problems.
  
  We just want to point out \cite[Proposition 5.2]{DFI02}, which can be used as a starting point for the proof.
  Its analogue for the case of mixed signs follows by a slight modification of the argument described there, and is given by
  \begin{multline*}
    \sum_{j = 1}^\infty \frac{ \overline{ \rho_j }(m, \mathfrak{a}) \rho_j(-n, \mathfrak{b}) \sqrt{mn} }{ \cosh( \pi(r - t_j) ) \cosh( \pi(r + t_j) ) } \\
      + \sum_{ \mathfrak{c} \text{ sing.} } \frac1{4\pi} \int_{-\infty}^\infty \! \frac{ \overline{ \varphi_{ \mathfrak{c}, t } } \left( m, \mathfrak{a} \right) \varphi_{\mathfrak{c}, t } \left( -n, \mathfrak{b} \right) \sqrt{mn} }{ \cosh( \pi(r - t_j) ) \cosh( \pi(r + t_j) )  } \, dt \\
      = \frac{ 2 \sqrt{mn} }{ \pi i^\kappa } \sum_\gamma \frac{ S_\mathfrak{ab}(m, -n; \gamma) }{\gamma^2} K_{2ir}\left( 4\pi \frac{ \sqrt{mn} }\gamma \right),
  \end{multline*}
  being true for positive integers \(m\), \(n\) and \( r \in \mathbb{R} \).
\end{proof}

In case \( \mathfrak{a} = \mathfrak{b} = \infty \), the sum of Kloosterman sums in the theorem above is just
\begin{align}
  \sum_\gamma \frac{ S_{\infty \infty}(m, \pm n; \gamma) }\gamma f\left( 4\pi \frac{ \sqrt{mn} }\gamma \right) &= \sum_{ c \equiv 0 \smod q } \frac{ S_\chi(m, \pm n; c) }c f\left( 4\pi \frac{ \sqrt{mn} }c \right), \nonumber
\end{align}
while in the case \( q = rs \) with \( (r, s) = 1 \) and \( q_0 \mid r \) mentioned above, we have
\begin{multline}
  \sum_\gamma \frac{ S_{\infty \frac1s}(m, \pm n; \gamma) }\gamma f\left( \frac{ 4\pi \sqrt{mn} }\gamma \right) \\
    = \e\left( \frac{ \pm n \overline{s} }r  \right) \sum_\twoln{c}{ (c, r) = 1 } \overline \chi(c) \frac{ S\left( m, \pm n \overline r; sc \right) }{ \sqrt{r} sc } f\left( \frac{ 4\pi \sqrt{mn} }{ \sqrt{r} sc } \right). \label{eqn: Kuznetsov formula for q = rs}
\end{multline}
To get some first estimates for the Bessel transforms appearing above we refer to \cite[Lemma 2.1]{BHM07a}, where the case \( \kappa = 0 \) is covered.
The proofs carry over to the case \( \kappa = 1 \) with minimal changes.

\begin{lemma}\label{lemma: estimates for the Bessel transforms}
  Let \( f : (0, \infty) \rightarrow \mathbb{C} \) be a smooth and compactly supported function such that
  \[ \supp f \asymp X \quad \text{and} \quad f^{ (\nu) }(\xi) \ll \frac1{Y^\nu} \quad \text{for} \quad \nu = 0, 1, 2, \]
  for positive \(X\) and \(Y\) with \( X \gg Y \).
  Then
  \begin{align*}
    \tilde f(it), \check f(it) &\ll \frac{ 1 + Y^{-2t} }{1 + Y} & \text{for} \quad 0 \leq t < \frac14, \\
    \frac{ \tilde f(t) }{ (1 + t)^\kappa }, \check f(t), \dot f(t) &\ll \frac{ 1 + |\log Y| }{1 + Y} & \text{for} \quad t \geq 0, \\
    \frac{ \tilde f(t) }{ (1 + t)^\kappa }, \check f(t), \dot f(t) &\ll \left( \frac XY \right)^2 \left( \frac1{ t^\frac52 } + \frac X{ t^3 } \right) & \text{for} \quad t \gg \max(X, 1).
  \end{align*}
\end{lemma}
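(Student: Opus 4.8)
The plan is to reduce everything to standard uniform estimates for the Bessel functions \( J_{2it} \), \( K_{2it} \) and \( J_{k-1} \), inserted into the defining integrals, and to treat the three regimes in the statement separately. For \( \kappa = 0 \) this is precisely \cite[Lemma 2.1]{BHM07a}, so it suffices to check that the modifications needed for \( \kappa = 1 \) are cosmetic; I sketch the argument with this in mind.

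For the first two lines I would argue directly. Splitting the \( \xi \)-integral at \( \xi \asymp 1 \): on \( \xi \ll 1 \) one uses the power-series expansions, so that \( J_{\pm 2it}(\xi), K_{2it}(\xi) \ll \cosh(\pi t)\,(1 + |\log\xi|) \) for real \( t \), and \( J_{\pm 2t}(\xi), K_{2t}(\xi) \ll \xi^{-2t} + |\log\xi| \) for \( 0 \le t < \tfrac14 \); on \( \xi \gg 1 \) one uses the standard asymptotics \( J_\nu(\xi) \ll \xi^{-1/2} \) (with an extra factor \( \cosh(\pi t) \) when \( \nu = 2it \)) together with the exponential decay of \( K_{2it}(\xi) \). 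Since \( f \) is supported on \( \xi \asymp X \) and the measure is \( d\xi/\xi \), integrating these bounds against \( f \) yields the factors \( 1 + |\log Y| \) — the logarithm being relevant only when \( X \ll 1 \), in which case also \( Y \le X \ll 1 \) — respectively \( Y^{-2t} \), and the additional gain \( 1/(1+Y) \) for \( Y \gg 1 \) is extracted by one or two integrations by parts in \( \xi \), using \( f^{(\nu)} \ll Y^{-\nu} \) and the fact that in the oscillatory range the Bessel functions and their derivatives are of the same order. The prefactors \( t^\kappa/\sinh(\pi t) \) and \( \cosh(\pi t) \) are absorbed by the exponential factors implicit in the Bessel bounds; the \( (1+t)^\kappa \) attached to \( \tilde f \) in the statement reflects exactly the extra \( t^\kappa \) present when \( \kappa = 1 \).

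The substantial part is the decay in \( t \) for \( t \gg \max(X, 1) \). Here the argument \( \xi \asymp X \) of the Bessel functions lies far below their order \( \asymp t \), so one expects rapid decay; making this quantitative and uniform is where oscillation must be exploited. I would combine two ingredients: first, a uniform estimate for a Bessel function of order \( \asymp t \) evaluated at an argument \( \ll t \) — obtained either from the asymptotic expansion of \( J_{2it} \), \( K_{2it} \), \( J_{k-1} \) in this range, or from integral representations in the spirit of \cite[Lemma 4]{DI82b} together with repeated integration by parts in the auxiliary variable — which, once the exponential factors cancel against the prefactor, contributes a main term of size \( \asymp t^{-1/2} \); and second, repeated integration by parts in \( \xi \) against \( f \), each step gaining a factor \( \ll X/(tY) \) because differentiating the Bessel function costs \( \ll t/X \) while differentiating \( f \) costs \( \ll 1/Y \) and one gains \( 1/t \) from the antiderivative. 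Two such steps give \( (X/Y)^2 t^{-5/2} \), and pushing the expansion one further order produces the secondary term \( (X/Y)^2 X t^{-3} \); keeping careful track of the transition and boundary terms in this procedure is the only genuinely delicate point. For \( \kappa = 1 \) the sole differences throughout are the extra factor \( t^\kappa = t \) (absorbed into \( (1+t)^\kappa \) as noted), the sign \( (-1)^\kappa \), which merely interchanges a cosine-type integrand for a sine-type one, and the harmless constant \( i^{-\kappa} \) in \( \check f \) — none of which interacts with the estimates — so the argument is identical to the one in \cite{BHM07a}.
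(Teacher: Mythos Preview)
Your proposal is correct and matches the paper's approach: the paper's proof is nothing more than a reference to \cite[Lemma 2.1]{BHM07a} for \(\kappa = 0\) together with the remark that ``the proofs carry over to the case \(\kappa = 1\) with minimal changes,'' which is exactly what you do. In fact you go further than the paper by actually sketching the underlying argument and identifying precisely where the extra factor \(t^\kappa\) and the sign \((-1)^\kappa\) enter.
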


For oscillating functions, we can do better.
Assume \( w : (0, \infty) \rightarrow \mathbb{C} \) to be a smooth and compactly supported function such that
\[ \supp w \asymp X \quad \text{and} \quad w^{ (\nu) }(\xi) \ll \frac1{X^\nu} \quad \text{for} \quad \nu \geq 0, \]
and for \( \alpha > 0 \) define
\[ f(\xi) := \e\left( \xi \frac\alpha{2 \pi} \right) w(\xi). \]
Then we have the following bounds.

\begin{lemma}\label{lemma: estimates for the Bessel transforms of oscillating functions for large alpha}
  Assume that
  \[ X \ll 1 \quad \text{and} \quad \alpha X \gg 1. \]
  Then for \( \nu, \mu \geq 0 \),
  \begin{align}
    \tilde f(it), \check f(it) &\ll X^{ -2t + \varepsilon } \left( X^\mu + \frac1{ (\alpha X)^\nu } \right) &\text{for} \quad 0 < t \leq \frac14, \label{eqn: first estimate for the bessel transforms for osciallting functions} \\
    \frac{ \tilde f(t) }{ (1 + t)^\kappa }, \check f(t), \dot f(t) &\ll \frac{\alpha^\varepsilon}{\alpha X} \left( \frac{\alpha X}t \right)^\nu &\text{for} \quad t > 0. \label{eqn: second estimate for the bessel transforms for osciallting functions}
  \end{align}
\end{lemma}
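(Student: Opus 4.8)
The plan is to argue as in the non-oscillating case of \cite[Lemma 2.1]{BHM07a} (see also \cite{Top15}), but to exploit the extra factor $\e(\xi\alpha/2\pi)=e^{i\alpha\xi}$ by repeated integration by parts — which is legitimate precisely because \emph{all} derivatives of $w$ obey $w^{(\nu)}(\xi)\ll X^{-\nu}$. In each case the relevant transform integral runs only over $\xi\asymp X$, and since $X\ll 1$ and $\alpha X\gg 1$ the phase $\alpha\xi$ is genuinely faster than the amplitude $w$, so there is no stationary phase to worry about in that variable.

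\textbf{The first estimate.} Here the transforms are evaluated at $it$ with $0<t\le\tfrac14$, so the Bessel index is the small real number $-2t\in[-\tfrac12,0)$ while the argument stays small. I would substitute the small-argument expansions
\[ J_{\pm 2t}(\xi)=\left(\tfrac\xi2\right)^{\pm 2t}G_{\pm}(\xi^2),\qquad K_{2t}(\xi)=\Gamma(2t)\left(\tfrac\xi2\right)^{-2t}H_1(\xi^2)+\Gamma(-2t)\left(\tfrac\xi2\right)^{2t}H_2(\xi^2), \]
with $G_\pm,H_j$ entire, $H_j(0)=\tfrac12$, and all derivatives bounded uniformly for $\xi\ll 1$. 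Each transform integral then splits into boundedly many pieces of the shape $\int\xi^{\pm 2t-1}\Psi(\xi)\,e^{i\alpha\xi}w(\xi)\,d\xi$ with $\Psi^{(\nu)}\ll X^{-\nu}$ on the support; the trivial bound is $\ll X^{\pm 2t}$, while integrating by parts $\nu$ times against $e^{i\alpha\xi}$ costs a factor $\asymp X^{-1}$ per step and gains $\alpha^{-1}$, for $\ll X^{\pm 2t}(\alpha X)^{-\nu}$. Taking the minimum and collecting the two powers of $\xi$ produces $X^{-2t+\varepsilon}\bigl(X^\mu+(\alpha X)^{-\nu}\bigr)$, once one checks that the prefactors do not spoil this: individually $\Gamma(\pm 2t)/\sinh(\pi t)$ blows up like $t^{-1}$ as $t\to 0$, but then the combination $\left(\tfrac\xi2\right)^{-2t}H_1-\left(\tfrac\xi2\right)^{2t}H_2$ (and likewise $J_{2t}-(-1)^\kappa J_{-2t}$) is $\ll t\,|\log\xi|$, so that the product stays controlled by $|\log X|$ and hence fits within the claimed $X^\varepsilon$.

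\textbf{The second estimate.} Now $t>0$ is real and the index $2it$ is purely imaginary. The key point is that on $\xi\asymp X\ll 1$ (so $\xi\ll\max(1,t)$) the Bessel functions $K_{2it}(\xi)$ and $J_{\pm 2it}(\xi)$ are, to leading order, of the form $c(2t)\,e^{2it\log\xi}+\overline{c(2t)}\,e^{-2it\log\xi}$ with $|c(2t)|\asymp t^{-1/2}e^{-\pi t}$, up to smooth corrections of relative size $O(\xi^2)$; this follows from their small-argument expansions, or from the integral representation \cite[3.871]{GR07} together with the substitution behind Lemma \ref{lemma: Bessel functions as oscillating functions}. The large prefactor $\cosh(\pi t)$ in $\check f$ (resp.\ the $t^\kappa e^{-\pi t}$-type behaviour of $t^\kappa/\sinh(\pi t)$ in $\tilde f/(1+t)^\kappa$) exactly compensates the smallness of $c(2t)$, leaving a net contribution $\ll t^{-1/2}\,\bigl|\int e^{i\psi_\pm(\xi)}\,\tfrac{w(\xi)}{\xi}\,d\xi\bigr|$ with $\psi_\pm(\xi)=\alpha\xi\pm 2t\log\xi$. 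Since $\psi_\pm'(\xi)=\alpha\pm 2t/\xi$, the branch $\psi_+$ is always non-stationary with $\psi_+'\asymp\alpha+t/X$, and so is $\psi_-$ unless $t\asymp\alpha X$; away from that range, repeated integration by parts (using also the $d\xi/\xi$, which already supplies the base factor $(\alpha X)^{-1}$ after one step) yields $\ll(\alpha X)^{-1}\alpha^\varepsilon$ when $t\ll\alpha X$ and the decaying factor $(\alpha X/t)^\nu$ when $t\gg\alpha X$. For $\dot f(k)$ with $k$ large one instead invokes $J_{k-1}(\xi)\ll(\xi/2)^{k-1}/(k-1)!$, which is already rapidly decreasing. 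Combining the ranges gives $\tfrac{\alpha^\varepsilon}{\alpha X}\left(\tfrac{\alpha X}{t}\right)^\nu$.

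\textbf{The main difficulty} in both parts will be uniformity in the spectral parameter. In the first estimate one must make the cancellation between the $\xi^{2t}$ and $\xi^{-2t}$ contributions precise enough to absorb the $t^{-1}$ coming from $\sinh(\pi t)$ and $\Gamma(\pm 2t)$ as $t\to 0$. In the second estimate the delicate point is the transition $t\asymp\alpha X$, where $\psi_-$ acquires a stationary point $\xi_0\asymp t/\alpha$ inside (or next to) the support of $w$: there a standard stationary-phase estimate, using $\psi_-''(\xi_0)\asymp\alpha/X$, shows the contribution is $\ll t^{-1/2}(\alpha X)^{-1/2}\asymp(\alpha X)^{-1}$, which matches the claimed bound with $\nu=0$; one has to carry out this stationary-phase analysis (or split the $\xi$-range according to the size of $\alpha\pm 2t/\xi$) carefully enough to glue it smoothly to the non-stationary regimes on either side.
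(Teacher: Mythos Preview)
Your approach is correct and matches the paper's: the paper proves \eqref{eqn: first estimate for the bessel transforms for osciallting functions} via the Taylor series of the Bessel functions (your small-argument expansions combined with integration by parts against $e^{i\alpha\xi}$), and \eqref{eqn: second estimate for the bessel transforms for osciallting functions} as a variant of \cite[Lemma~3]{Jut99}, i.e.\ exactly the oscillatory-integral/stationary-phase analysis you sketch, referring to \cite[Lemma~2.6]{Top15} for the details. Your write-up is in fact considerably more explicit than the paper's own proof, which consists essentially of these references.
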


\begin{proof}
  The bound \eqref{eqn: first estimate for the bessel transforms for osciallting functions} can be shown by making use of the Taylor series of the respective Bessel functions.
  The proof of \eqref{eqn: second estimate for the bessel transforms for osciallting functions} is a variation of the proof of \cite[Lemma 3]{Jut99}.
  See \cite[Lemma 2.6]{Top15} for details.
\end{proof}

\subsection{Large sieve inequalities and estimates for Fourier coefficients}

Another important tool are the large sieve inequalities for Fourier coefficients of cusp forms and Eisenstein series.
For a sequence \( a_n \) of complex numbers define
\[ \| a_n \|_N := \sqrt{ \sum_{N < n \leq 2N} |a_n|^2 }, \]
and furthermore set

\begin{align*}
  \Sigma_{j, \pm}^{(1)} (N) &:= \frac{ ( 1 + |t_j| )^{\pm \frac\kappa2} }{ \sqrt{ \cosh(\pi t_j) } } \sum_{N < n \leq 2N} a_n \rho_j( \pm n, \mathfrak{a} ) \sqrt{n}, \\
  \Sigma_{ \mathfrak{c}, t, \pm }^{(2)} (N) &:= \frac{ ( 1 + |t| )^{\pm \frac\kappa2} }{ \sqrt{ \cosh(\pi t) } } \sum_{N < n \leq 2N} a_n \varphi_{ \mathfrak{c}, t }( \pm n, \mathfrak{a} ) \sqrt{n}, \\
  \Sigma_{j, k}^{(3)} (N) &:= \sqrt{ (k - 1)! } \sum_{N < n \leq 2N} a_n \psi_{j, k}( n, \mathfrak{a} ) \sqrt{n}.
\end{align*}

Then the following bounds are known as the large sieve inequalities.

\begin{thm}\label{thm: large sieve inequalities}
  Let \( T \geq 1 \) and \( N \geq \frac12 \) be real numbers, \( a_n \) a sequence of complex numbers, and \( \mathfrak{a} \) a singular cusp of \(\Gamma\) written in the form \( \mathfrak{a} = \frac uw \) with \( (u, w) = 1 \).
  Then
  \begin{align*}
    \sum_{ | t_j | \leq T } \left| \Sigma_{j, \pm}^{(1)} (N) \right|^2 &\ll \left( T^2 + {q_0}^\frac12 \left( w, \frac qw \right) \frac{ N^{1 + \varepsilon} }q \right) \| a_n \|_N^2, \\
    \sum_{ \mathfrak{c} \text{ sing.} } \int_{-T}^T \! \left| \Sigma_{ \mathfrak{c}, t, \pm }^{(2)} (N) \right|^2 \, dt &\ll \left( T^2 + {q_0}^\frac12 \left( w, \frac qw \right) \frac{ N^{1 + \varepsilon} }q \right) \| a_n \|_N^2, \\
    \sum_\twoln{ k \leq T, \,\, k \equiv \kappa \smod 2 }{ 1 \leq j \leq \theta_k(q, \chi) } \left| \Sigma_{k, j}^{(3)}(N) \right|^2 &\ll \left( T^2 + {q_0}^\frac12 \left( w, \frac qw \right) \frac{ N^{1 + \varepsilon} }q \right) \| a_n \|_N^2,
  \end{align*}
  where the implicit constants depend only on \( \varepsilon \).
\end{thm}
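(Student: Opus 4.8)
The plan is to reduce everything to the large sieve inequalities of Deshouillers--Iwaniec \cite{DI82a}, in the refined form worked out by Drappeau \cite{Dra15}, which handle exactly the setting of a Hecke congruence subgroup $\Gamma_0(q)$ with nebentypus $\chi$ and a general singular cusp $\mathfrak{a} = \frac uw$. First I would recall that the essential content of those inequalities is precisely the shape stated here: the spectral sum over cusp forms, the continuous integral over Eisenstein series, and the holomorphic sum are each bounded by $\bigl(T^2 + c(\mathfrak a)\, N^{1+\varepsilon}/q\bigr)\|a_n\|_N^2$ for a cusp-dependent constant $c(\mathfrak a)$, and the only point requiring genuine attention is the precise value of $c(\mathfrak a)$ in terms of $q$, $q_0$ and the denominator $w$ of the cusp. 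So the real work is bookkeeping: tracking how the width of the cusp and the conductor of $\chi$ enter.

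The key steps, in order, are as follows. (i) Normalize the cusp: write $\mathfrak a = \frac uw$ with $(u,w)=1$ and $w \mid q$, and fix the scaling matrix $\sigma_{\mathfrak a}$; the set of moduli $\gamma$ for which $S_{\mathfrak{a}\mathfrak{a}}(n,n;\gamma)$ is non-empty, together with the width of $\mathfrak a$, is governed by $(w, q/w)$, which is where that gcd factor in the bound comes from. (ii) Apply the duality principle: bounding $\sum_{|t_j|\le T}|\Sigma^{(1)}_{j,\pm}(N)|^2$ is equivalent to bounding $\sum_{N<n\le 2N}\bigl|\sum_{|t_j|\le T} b_j\, \rho_j(\pm n,\mathfrak a)\sqrt n\,(1+|t_j|)^{\pm\kappa/2}\cosh(\pi t_j)^{-1/2}\bigr|^2$, and similarly for the Eisenstein and holomorphic pieces. (iii) Open the square and invoke the Kuznetsov formula (Theorem \ref{thm: Kuznetsov trace formula}) with $\mathfrak a = \mathfrak b$ to convert the diagonal $n=m$ and off-diagonal terms into a sum of Kloosterman sums $S_{\mathfrak{a}\mathfrak{a}}(n,m;\gamma)$ against a suitably chosen test function $f$ localized so that $\tilde f, \check f, \dot f$ majorize the spectral weights on $|t|\le T$; this is the standard device for deriving the large sieve from the trace formula. (iv) Estimate the resulting Kloosterman-sum sum trivially via Weil's bound in the form recorded for $S_\chi$ (with the ${q_0}^{1/2}$ conductor factor) together with the counting of admissible $\gamma$, producing the two terms $T^2$ and ${q_0}^{1/2}(w,q/w)N^{1+\varepsilon}/q$. (v) Handle the $\pm$ sign and the factor $(1+|t_j|)^{\pm\kappa/2}$ uniformly by using the relation $|\rho_j(-n,\infty)| = |t_j|^\kappa|\rho_j(n,\infty)|$ (and its analogue at a general cusp and for Eisenstein coefficients) recorded in section \ref{subsection: Automorphic forms and their Fourier expansions}, so that the $\pm$ cases are interchangeable after absorbing the $(1+|t_j|)$ powers.

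The main obstacle I expect is step (i) together with the uniformity claim: making the constant $c(\mathfrak a) = {q_0}^{1/2}(w, q/w)$ completely explicit and verifying that it is the right dependence for \emph{every} singular cusp, not just $\infty$ and $\frac 1s$. The subtlety is that the Fourier coefficients at $\mathfrak a$ are those of the form transported by $\sigma_{\mathfrak a}$, and the Weil bound one ends up applying is for the Kloosterman sums $S_{\mathfrak{a}\mathfrak{a}}(n,m;\gamma)$ attached to that cusp, whose modulus structure and conductor behaviour must be matched against $\{w, q/w, q_0\}$; this is exactly the computation Drappeau carries out, so in the write-up I would state the result as a consequence of \cite[...]{Dra15} (quoting the relevant proposition) after the above reduction, rather than redoing the Kloosterman-sum combinatorics from scratch. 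The contribution of the constant functions when $\chi$ is trivial is negligible and is absorbed into the Eisenstein term in the usual way.
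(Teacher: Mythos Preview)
Your proposal is correct and aligns with the paper's approach: the paper's proof is itself just a reference to \cite[section 5]{DI82a} together with a pointer to \cite{Dra15} for the extension to general nebentypus and singular cusps, which is exactly what you end up invoking. Your steps (i)--(v) constitute a reasonable expansion of what actually happens in those references, so you have simply made explicit what the paper leaves as a citation.
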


\begin{proof}
  With the appropriate changes, these bounds can be deduced essentially in the same way as it is done in \cite[section 5]{DI82a}.
  We refer to \cite{Dra15} for details.
\end{proof}

When there is no averaging over \(n\), the following lemma gives useful bounds, especially when \(q\) or \(T\) is large.

\begin{lemma}\label{lemma: estimates for Fourier coefficients, I}
  Let \( T \geq 1 \), \( n \geq 1 \), and \(\mf a\) as above.
  Then
  \begin{align*}
    \sum_{ | t_j | \leq T } \frac{ ( 1 + |t_j| )^{\pm \kappa} }{ \cosh(\pi t_j) } | \rho_j( \pm n, \mathfrak{a} ) |^2 n &\ll T^2 + (qnT)^\varepsilon (q, n)^\frac12 {q_0}^\frac12 \left( w, \frac qw \right) \frac{ n^\frac12 }q, \\
    \sum_{ \mathfrak{c} \text{ sing.} } \int_{-T}^T \! \frac{ ( 1 + |t| )^{\pm \kappa} }{ \cosh(\pi t) } | \varphi_{ \mathfrak{c}, t }( \pm n, \mathfrak{a} ) |^2 n \, dt &\ll T^2 + (qnT)^\varepsilon (q, n)^\frac12 {q_0}^\frac12 \left( w, \frac qw \right) \frac{ n^\frac12 }q, \\
    \sum_\twoln{ k \leq T, \,\, k \equiv \kappa \smod 2 }{ 1 \leq j \leq \theta_k(q, \chi) } (k - 1)! \left| \psi_{j, k}( n, \mathfrak{a} ) \right|^2 n &\ll T^2 + (qnT)^\varepsilon (q, n)^\frac12 {q_0}^\frac12 \left( w, \frac qw \right) \frac{ n^\frac12 }q,
  \end{align*}
  where the implicit constants depend only on \( \varepsilon \).
\end{lemma}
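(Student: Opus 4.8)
The plan is to deduce all three estimates at once from the Kuznetsov trace formula (Theorem~\ref{thm: Kuznetsov trace formula}), applied with $\mathfrak b = \mathfrak a$ and $m = n$ to a test function adapted to the spectral window $|t| \leq T$. First I would fix, once and for all, an even, non-negative Schwartz function on $\mathbb R$ that is $\geq 1$ on $[-1, 1]$ and extends to an even entire function which is real and non-negative on the segment $\{ it : |t| \leq \theta \}$; rescaling its argument by $T$ and inverting the appropriate Bessel transform then produces a smooth $f \colon (0, \infty) \to \mathbb R$ with $\supp f \asymp T$ and $f^{(\nu)} \ll_\nu T^{-\nu}$ for all $\nu \geq 0$, whose Bessel transforms $\tilde f$, $\check f$, $\dot f$ are all non-negative, which satisfy $\tilde f(it), \check f(it) \geq 0$ at the finitely many exceptional spectral parameters (where $it_j \leq \theta$), and which are $\gg 1$ on $|t| \leq T$ once $\tilde f$ is divided by $(1 + |t|)^\kappa$. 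Such constructions are classical; compare \cite[\S7]{DI82a} or \cite{Iwa02}. For this $f$ every term on the spectral side of Theorem~\ref{thm: Kuznetsov trace formula} is non-negative, so, keeping only the cusp-form sum — and recalling that for $m = n$, $\mathfrak b = \mathfrak a$ the formula also carries a diagonal contribution $\delta_{m = n}\, c(f)$, which is absent from the statement of Theorem~\ref{thm: Kuznetsov trace formula} only because it plays no role in the applications there — one arrives at
\[
  \sum_{|t_j| \leq T} \frac{ (1 + |t_j|)^\kappa }{ \cosh(\pi t_j) } | \rho_j(n, \mathfrak a) |^2 \, n \;\ll\; |c(f)| + \sum_\gamma \frac{ | S_{\mathfrak a \mathfrak a}(n, n; \gamma) | }{\gamma} \left| f\!\left( \frac{ 4\pi n }{\gamma} \right) \right|,
\]
together with the same inequality with the cusp-form sum replaced by the Eisenstein integral or the holomorphic sum.

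The next step is to estimate the two quantities on the right. For the chosen $f$, the diagonal $c(f)$ is, up to a fixed constant, the integral of $\tilde f(t)$ against $t \tanh(\pi t)$, and is therefore $\ll \int_{|t| \ll T} (1 + |t|) \, dt \ll T^2$. On the Kloosterman side, $\supp f \asymp T$ forces $4\pi n / \gamma \asymp T$, i.e.\ $\gamma \asymp n / T$ (if this is below the smallest admissible value of $\gamma$ the sum is empty and there is nothing to do). Otherwise I would insert the Weil-type bound for the sums $S_{\mathfrak a \mathfrak a}$ recalled in \S\ref{subsection: The Kuznetsov trace formula and the Large sieve inequalities} — namely $S_\chi(n, n; c) \ll (n, c)^{\frac12} {q_0}^{\frac12} c^{\frac12 + \varepsilon}$ when $\mathfrak a = \infty$, and for a general cusp $\mathfrak a = u/w$ the corresponding estimate obtained by unfolding $S_{\mathfrak a \mathfrak a}$ with the scaling matrix $\sigma_\mathfrak a$ into ordinary Kloosterman sums — and then sum over the admissible $\gamma \asymp n/T$; exactly as in the large sieve of Theorem~\ref{thm: large sieve inequalities}, this is where the arithmetic factor $\left( w, \frac qw \right) / q$ enters. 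Using $|f| \ll 1$ and $\gamma \asymp n/T$, these contributions combine to give
\[
  \sum_\gamma \frac{ | S_{\mathfrak a \mathfrak a}(n, n; \gamma) | }{\gamma} \left| f\!\left( \frac{ 4\pi n }{\gamma} \right) \right| \;\ll\; (qnT)^\varepsilon \, (q, n)^{\frac12} \, {q_0}^{\frac12} \left( w, \frac qw \right) \frac{ n^{\frac12} }{q},
\]
which together with the $T^2$ from $c(f)$ is the first bound; the second and third follow verbatim. The versions with $(1 + |t_j|)^{-\kappa}$ and $\rho_j(-n, \mathfrak a)$, resp.\ $\varphi_{\mathfrak c, t}(-n, \mathfrak a)$, then follow by combining the bounds just obtained with the relation $| \rho_j(-n, \mathfrak a) | \ll (1 + |t_j|)^\kappa | \rho_j(n, \mathfrak a) |$ — the even/odd symmetry when $\kappa = 0$, and a consequence of the shape of the Fourier expansions of \S\ref{subsection: Automorphic forms and their Fourier expansions} (cf.\ \cite{DFI02}) when $\kappa = 1$ — and its analogue for Eisenstein coefficients; the holomorphic sum involves only positive indices, so nothing further is needed there.

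The part I expect to be the main obstacle is the construction and the bookkeeping for the test function $f$: it must at once have the right localisation and derivative bounds, have \emph{all three} Bessel transforms non-negative \emph{including} at the exceptional spectral parameters (so that the Eisenstein and holomorphic contributions may legitimately be dropped), and be explicit enough to identify the diagonal term as being of size $T^2$. With that in hand, the Kloosterman-side estimate is a fairly mechanical application of the Weil bounds from \S\ref{subsection: The Kuznetsov trace formula and the Large sieve inequalities}; the only delicate point there is to treat the cusps $\mathfrak a = \infty$ and $\mathfrak a = u/w$ uniformly, so that ${q_0}^{1/2}$, $\left( w, \frac qw \right)$ and $(q, n)^{1/2}$ appear exactly as stated. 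Alternatively, the lemma could simply be quoted, with cosmetic changes, from \cite{DI82a} (see also \cite{BHM07a} and \cite{Dra15}).
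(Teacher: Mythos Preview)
Your overall strategy --- apply a trace formula at $m=n$, $\mathfrak b=\mathfrak a$, use positivity on the spectral side, and bound the geometric side by Weil --- is correct and is precisely what the references cited in the paper do. The gap is in \emph{which} formula you invoke. Theorem~\ref{thm: Kuznetsov trace formula} is the ``Kloosterman $\to$ spectral'' identity and it genuinely has \emph{no} diagonal term; the statement there is complete, not truncated for convenience. In fact a test function as you describe cannot exist: if $f$ is smooth with $\supp f\asymp T$ and all three Bessel transforms are non-negative, apply Theorem~\ref{thm: Kuznetsov trace formula} for the full modular group with $m=n=1$. For $T$ large enough the condition $4\pi/\gamma\in\supp f$ rules out every integer $\gamma\geq 1$, so the Kloosterman side is an empty sum, hence $0$; positivity then forces $\tilde f(t_j)=\dot f(k)=0$ on the entire spectrum, contradicting $\tilde f\gg 1$ on $[-T,T]$. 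So the $T^{2}$ you need cannot be extracted from Theorem~\ref{thm: Kuznetsov trace formula}.

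The formula that carries the diagonal term is the reverse direction (spectral $\to$ Kloosterman), or equivalently the pre-Kuznetsov identity the paper singles out in the proof of Theorem~\ref{thm: Kuznetsov trace formula}, namely \cite[Proposition~5.2]{DFI02}; for the holomorphic part it is the Petersson formula. There one chooses a \emph{spectral} test function $h$ (not a geometric $f$) that is non-negative, holomorphic in the strip $|\Im t|\leq\theta+\varepsilon$, and $\gg 1$ on $|t|\leq T$; the delta term is then $\asymp\int h(t)\,t\tanh(\pi t)\,dt\asymp T^{2}$, and the Kloosterman side, bounded by Weil exactly as you indicate, produces the second summand. With this single correction your outline matches the argument in \cite[Lemma~2.4]{Mot97} and \cite[Proposition~4]{DI82a} that the paper is pointing to.
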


\begin{proof}
  For the full modular group and trivial nebentypus, a proof for the first two bounds can be found for example in \cite[Lemma 2.4]{Mot97}.
  Using an appropriate trace formula as starting point (e.g. \cite[Proposition 5.2]{DFI02}) , the proof carries over easily to our case.
  Except for the same kind of modifications, the proof of the last bound is a simpler variant of \cite[Proposition 4]{DI82a}.
\end{proof}

For \(n\) large, the following bounds are often better.

\begin{lemma}\label{lemma: estimates for Fourier coefficients, II}
  Let \( T \geq 1 \), \( n \geq 1 \) and \(\mf a\) as above.
  Then
  \begin{align}
    \sum_{ | t_j | \leq T } \frac{ ( 1 + |t_j| )^{\pm \kappa} }{ \cosh(\pi t_j) } | \rho_j( \pm n, \infty ) |^2 n &\ll (qnT)^\varepsilon T^2 n^{2\theta}, \label{eqn: estimate for the Fourier coefficients of Maass cusp forms} \\
    \sum_{ \mathfrak{c} \text{ sing.} } \int_{-T}^T \! \frac{ ( 1 + |t| )^{\pm \kappa} }{ \cosh(\pi t) } | \varphi_{ \mathfrak{c}, t }( \pm n, \infty ) |^2 n \, dt &\ll (qnT)^\varepsilon T,  \label{eqn: estimate for the Fourier coefficients of Eisenstein series} \\
    \sum_\twoln{ k \leq T, \,\, k \equiv \kappa \smod 2 }{ 1 \leq j \leq \theta_k(q, \chi) } (k - 1)! \left| \psi_{j, k}( n, \infty ) \right|^2 n &\ll (qnT)^\varepsilon T^2 , \label{eqn: estimate for the Fourier coefficients of holomorphic cusp forms}
  \end{align}
  where the implicit constants depend only on \( \varepsilon \).
\end{lemma}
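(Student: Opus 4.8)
The plan is to derive the three estimates from the corresponding bounds at $n = 1$, which are contained in Lemma~\ref{lemma: estimates for Fourier coefficients, I}, by exploiting the multiplicativity of the Fourier coefficients at the cusp $\infty$. Consider first \eqref{eqn: estimate for the Fourier coefficients of Maass cusp forms}. Since the $u_j$ are joint eigenfunctions of the Hecke operators $T_m$ with $(m, q) = 1$, a newform $g$ of level $q_g \mid q$ satisfies $\rho_g(n, \infty)\sqrt{n} = \rho_g(1, \infty)\lambda_g(n)$ for all $n \geq 1$, where $\lambda_g$ is its arithmetically normalised Hecke eigenvalue; one has $|\lambda_g(n)| \leq d(n)\, n^\theta$ by the Kim--Sarnak bound, and for primes $p \mid q_g$ the local coefficients even obey Ramanujan's bound, so that $|\rho_g(n, \infty)|^2\, n \lleps n^{2\theta + \varepsilon}\, |\rho_g(1, \infty)|^2$. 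To pass from newforms to the whole basis one decomposes $\mathcal{L}_0^2(q, \chi)$ into the subspaces spanned by the systems $\{ g(dz) : d \mid q/q_g \}$ and invokes the standard quantitative bounds for the Gram matrices of these systems (cf.\ \cite[Chapter 14]{IK04}, \cite{DI82a}); together with the relation $|\rho_j(-n, \infty)| = |t_j|^\kappa\, |\rho_j(n, \infty)|$ this shows that the left-hand side of \eqref{eqn: estimate for the Fourier coefficients of Maass cusp forms} is
\[ \lleps (qn)^\varepsilon\, n^{2\theta} \sum_{|t_j| \leq T} \frac{(1 + |t_j|)^\kappa}{\cosh(\pi t_j)}\, |\rho_j(1, \infty)|^2. \]
Now Lemma~\ref{lemma: estimates for Fourier coefficients, I} applied with $n = 1$ and $\mf a = \infty$ finishes the argument: for this cusp one may take $w = q$, so that $(w, q/w) = 1$ and the arithmetic term reduces to $\ll (qT)^\varepsilon\, {q_0}^{1/2}/q \ll (qT)^\varepsilon$, whence the sum above is $\ll (qT)^\varepsilon T^2$.

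The estimate \eqref{eqn: estimate for the Fourier coefficients of holomorphic cusp forms} follows in precisely the same manner; the only change is that Deligne's bound $|\lambda_{j, k}(n)| \leq d(n)$ is available, so the factor $n^{2\theta}$ may be dropped, and the role of Lemma~\ref{lemma: estimates for Fourier coefficients, I} is taken over by its third line, which at $n = 1$ and $\mf a = \infty$ gives $\sum_{k \leq T} \sum_{j} (k - 1)!\, |\psi_{j, k}(1, \infty)|^2 \ll (qT)^\varepsilon T^2$.

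For \eqref{eqn: estimate for the Fourier coefficients of Eisenstein series} it is cleaner to argue directly from the explicit Fourier expansions of the Eisenstein series $E_{\mf c}(\,\cdot\,; \tfrac12 + it)$ around $\infty$ (see \cite[Section 4]{DFI02}): the coefficient $\varphi_{\mf c, t}(n, \infty)$ is a Ramanujan-bounded divisor-type sum over the divisors of $n$, divided by a Dirichlet $L$-value at $1 + 2it$ and multiplied by gamma factors. Using $|L(1 + 2it, \psi)|^{-1} \ll (q(2 + |t|))^\varepsilon$ together with $|\varphi_{\mf c, t}(-n, \infty)| = |t|^\kappa\, |\varphi_{\mf c, t}(n, \infty)|$, one obtains $\frac{(1 + |t|)^{\pm\kappa}}{\cosh(\pi t)}\, |\varphi_{\mf c, t}(\pm n, \infty)|^2\, n \ll (q\, n\, (2 + |t|))^\varepsilon$; since $\Gamma_0(q)$ has $\ll q^\varepsilon$ cusps, integrating over $|t| \leq T$ yields \eqref{eqn: estimate for the Fourier coefficients of Eisenstein series}. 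No factor $n^{2\theta}$ appears here, since the relevant Dirichlet-series coefficients satisfy Ramanujan's bound.

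The only genuinely delicate step is the reduction to $n = 1$ in the cuspidal cases in the presence of oldforms: the pointwise bound $|\rho_j(n, \infty)|^2\, n \ll n^{2\theta + \varepsilon}\, |\rho_j(1, \infty)|^2$ is immediate for newforms but need not hold for general basis elements (if the component of $u_j$ along $g(z)$ is small while that along some $g(dz)$ is not), so one has to replace it by a bound for the contribution of the whole oldform block, which requires controlling the Gram matrices of the systems $\{ g(dz) \}$ uniformly in the level. This is classical, and everything else follows routine lines as in \cite{DI82a} and \cite{Mot97}.
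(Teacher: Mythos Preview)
Your approach for the cuspidal parts \eqref{eqn: estimate for the Fourier coefficients of Maass cusp forms} and \eqref{eqn: estimate for the Fourier coefficients of holomorphic cusp forms} is correct and is exactly the route taken in the paper's reference (Michel, \cite[Proposition~2.3]{Mich04}): one passes to a basis adapted to the Atkin--Lehner decomposition, uses the Hecke relations together with Kim--Sarnak (resp.\ Deligne) to pull out the factor $n^{2\theta}$ (resp.\ $1$), and then invokes the $n=1$ density bound. Your honest flagging of the oldform step is appropriate; that is indeed the only nontrivial point, and it is handled by the Gram-matrix estimates you allude to.

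There is, however, a genuine gap in your argument for \eqref{eqn: estimate for the Fourier coefficients of Eisenstein series}. The assertion that $\Gamma_0(q)$ has $\ll q^\varepsilon$ cusps is false for non-squarefree level: the number of cusps is $\sum_{d \mid q} \varphi((d,q/d))$, which for $q = p^{2m}$ is $\asymp p^m = q^{1/2}$. Since in the paper's application the level is $\tilde q = t_1 s_2 u_2^\ast v^2$, this is not a harmless special case. With your individual bound $\frac{(1+|t|)^{\pm\kappa}}{\cosh(\pi t)}|\varphi_{\mf c,t}(\pm n,\infty)|^2 n \ll (qn(2+|t|))^\varepsilon$ and the correct cusp count, you would pick up an unwanted power of $q$.

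The fix, which is precisely what \cite[Lemma~1]{BM15} carries out, is that the explicit Fourier coefficients of $E_{\mf c}(\,\cdot\,;\tfrac12+it)$ at $\infty$ carry an extra normalization factor coming from the width of $\mf c$ (or, in the character parametrization of the continuous spectrum, from the conductors of the inducing characters). This additional decay in the level, when summed over the cusps, exactly compensates for their number and yields a bound uniform in $q$. So your overall strategy of computing directly from the explicit expansions is the right one, but you must keep track of this level-dependent normalization rather than bounding each cusp's contribution by $(qn(2+|t|))^\varepsilon$ and then counting cusps.
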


\begin{proof}
  The bounds \eqref{eqn: estimate for the Fourier coefficients of Maass cusp forms} and \eqref{eqn: estimate for the Fourier coefficients of holomorphic cusp forms} can be proven along the lines of \cite[Proposition 2.3]{Mich04}.
  For \eqref{eqn: estimate for the Fourier coefficients of Eisenstein series} we refer to \cite[Lemma 1]{BM15}.
\end{proof}

Finally, in order to handle exceptional eigenvalues, which occur in the case \( \kappa = 0 \), the following result will turn out to be useful.

\begin{lemma}\label{lemma: estimates for the exceptional eigenvalues}
  Let \( X \geq 1 \), \( n \geq 1 \) and \( \mf a \) as above.
  Assume that
  \[ X \gg X_0, \quad \text{with} \quad X_0 := \frac q{ (q, n)^\frac12 {q_0}^\frac12 \left( w, \frac qw \right) n^\frac12 }.  \]
  Then
  \[ \sum_{ t_j \text{ exc.} } \frac{ | \rho_j( \pm n, \mf a ) |^2 n }{ \cosh(\pi t_j) } X^{4i t_j} \ll (qnX)^\varepsilon \left( \frac X{X_0} \right)^{4\theta} \left( 1 + (q, n)^\frac12 {q_0}^\frac12 \left( w, \frac qw \right) \frac{ n^\frac12 }q \right), \]
  where the implicit constants only depend on \( \varepsilon \).
\end{lemma}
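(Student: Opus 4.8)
The plan is to deduce this from the Kuznetsov trace formula (Theorem~\ref{thm: Kuznetsov trace formula}) used at a critical scale, combined with the unconditional bounds of Lemma~\ref{lemma: estimates for Fourier coefficients, I} and the size estimates of Lemma~\ref{lemma: estimates for the Bessel transforms}. First I would make a few reductions. Exceptional eigenvalues occur only for \( \kappa = 0 \), and in that case each basis element is even or odd so that \( |\rho_j(-n, \mf a)| = |\rho_j(n, \mf a)| \) for all \(j\), while \( \cosh(\pi t_j) \asymp 1 \) on the exceptional spectrum; hence it suffices to treat the sum with \( +n \). Writing \( \beta_j := i t_j \in (0, \theta] \), \( a_j := |\rho_j(n, \mf a)|^2 n / \cosh(\pi t_j) \), and \( \Delta := 1/X_0 = (q, n)^{1/2} {q_0}^{1/2} (w, q/w)\, n^{1/2} / q \), and using \( \beta_j \leq \theta \) together with \( X \gg X_0 \) (so \( X^{4\beta_j} \ll X_0^{4\beta_j}(X/X_0)^{4\theta} \)) and \( (qn)^\varepsilon \le (qnX)^\varepsilon \), the whole statement follows once we show the critical bound
\[ \sum_{t_j \text{ exc.}} a_j X_0^{4\beta_j} \ll (qn)^\varepsilon (1 + \Delta). \]
If \( X_0 \ll 1 \) this is immediate from the first bound of Lemma~\ref{lemma: estimates for Fourier coefficients, I} with \( T = 1 \), so from now on I would assume \( X_0 \gg 1 \).

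To prove the critical bound I would apply the first identity of Theorem~\ref{thm: Kuznetsov trace formula} with \( m = n \), \( \mf b = \mf a \), and a fixed smooth \( f \geq 0 \) supported in \( [\delta, 2\delta] \) with \( \delta := X_0^{-2} \), \( \int f \asymp \delta \) and \( f^{(\nu)} \ll \delta^{-\nu} \), and isolate the exceptional part of the cuspidal spectrum:
\[ \sum_{t_j \text{ exc.}} a_j \tilde f(t_j) \;=\; \sum_\gamma \frac{S_{\mf a \mf a}(n, n; \gamma)}{\gamma}\, f\!\left(\frac{4\pi n}{\gamma}\right) \;-\; \sum_{t_j \text{ not exc.}} a_j \tilde f(t_j) \;-\; (\text{Eisenstein}) \;-\; (\text{holomorphic}). \]
The scale \( \delta = X_0^{-2} \) is chosen so that two things happen at once. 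On the spectral side, the power series of the Bessel functions near the origin, where \( J_{-2\beta}(\xi) \sim (\xi/2)^{-2\beta}/\Gamma(1-2\beta) \) dominates \( J_{2\beta}(\xi) \ll \xi^{2\beta} \), yields for \( 0 < \beta \leq \theta \) the lower bound \( \tilde f(-i\beta) \gg \delta^{-2\beta}/\sin(\pi\beta) \gg X_0^{4\beta} \); since \( \tilde f \) is even and the \( a_j \) are non-negative, the left-hand side is therefore \( \gg \sum_{t_j \text{ exc.}} a_j X_0^{4\beta_j} \), which is exactly what we want to bound from above. On the geometric side, \( \supp f \asymp \delta \) forces \( \gamma \asymp n X_0^2 \), and bounding the Kloosterman sums \( S_{\mf a \mf a}(n, n; \gamma) \) by Weil's bound and counting the admissible \( \gamma \) in this dyadic range should make the whole sum \( \ll (qn)^\varepsilon \) — this is precisely where the arithmetic factors in \( X_0 \) cancel against the length of the \( \gamma \)-sum.

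For the remaining spectral contributions I would combine Lemma~\ref{lemma: estimates for the Bessel transforms} (which, for \(f\) on this scale, gives \( \tilde f, \check f, \dot f \ll X_0^\varepsilon \) for bounded spectral parameter and \( \ll t^{-5/2} \) for large parameter) with the three bounds of Lemma~\ref{lemma: estimates for Fourier coefficients, I}, each of the shape \( \ll T^2 + (qnT)^\varepsilon \Delta \); a dyadic decomposition over \( |t_j| \), over the Eisenstein integration variable, and over the holomorphic weights then shows that each of the three remaining spectral sums is \( \ll (qn)^\varepsilon(1 + \Delta) \). Collecting all the pieces gives the critical bound and hence the lemma.

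I expect the main obstacle to be the bookkeeping on the geometric side: one has to check, for a general singular cusp \( \mf a \) and at the critical scale \( \delta = X_0^{-2} \), that Weil's bound for \( S_{\mf a \mf a}(n, n; \gamma) \) together with the count of admissible \( \gamma \asymp n X_0^2 \) really collapses to \( \ll (qn)^\varepsilon \), with the factors \( (q, n)^{1/2} \), \( {q_0}^{1/2} \) and \( (w, q/w) \) in the definition of \( X_0 \) cancelling exactly rather than leaving a spurious power of \(q\) or \(n\). A secondary technical point is establishing the lower bound \( \tilde f(-i\beta) \gg X_0^{4\beta} \) uniformly as \( \beta \to 0^{+} \), which requires isolating the leading term of the Bessel series and noting that the factor \( 1/\sin(\pi\beta) \) only works in our favour.
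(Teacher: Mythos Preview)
Your approach is essentially the paper's: both first pull out the factor \( (X/X_0)^{4\theta} \) via \( \beta_j \le \theta \) and then reduce to the density-type bound \( \sum_{t_j\ \text{exc.}} a_j\, Y^{4it_j} \ll 1 + (qnY)^\varepsilon (q,n)^{1/2} q_0^{1/2} (w,q/w)\, n^{1/2} Y / q \) at \( Y = 1 + X_0 \), which the paper simply cites from Iwaniec--Kowalski (chapter 16.5) while you sketch its standard proof via Kuznetsov with a test function at scale \( \delta = X_0^{-2} \). One small caveat: your reduction \( |\rho_j(-n,\mathfrak a)| = |\rho_j(n,\mathfrak a)| \) via even/odd parity is only asserted in the paper for \( \mathfrak a = \infty \); for a general cusp it is safer to treat the minus sign directly with the opposite-sign Kuznetsov identity and \( \check f \), where the same argument goes through unchanged.
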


\begin{proof}
  We have that
  \[ \sum_{ t_j \text{ exc.} } \frac{ | \rho_j( \pm n, \mf a ) |^2 n }{ \cosh(\pi t_j) } X^{4i t_j} \ll \left( \frac X{X_0} \right)^{4\theta} \sum_{ t_j \text{ exc.} } \frac{ | \rho_j( \pm n, \mf a ) |^2 n }{ \cosh(\pi t_j) } \left( 1 + X_0 \right)^{4i t_j}. \]
  Now we use the fact that for any \( Y \geq 1 \),
  \[ \sum_{ t_j \text{ exc.} } \frac{ | \rho_j( \pm n, \mf a ) |^2 n }{ \cosh(\pi t_j) } Y^{4i t_j} \ll 1 + (qnY)^\varepsilon (q, n)^\frac12 {q_0}^\frac12 \left( w, \frac qw \right) \frac{ n^\frac12 Y }q, \]
  which can be shown the same way as in \cite[chapter 16.5]{IK04}, and the result follows.
\end{proof}

\section{Proof of Theorems \ref{thm: main theorem, smooth version} and \ref{thm: main theorem}}

Let \( w_1, w_2 : (0, \infty) \rightarrow [0, \infty) \) be smooth functions, which are compactly supported in \( [ 1/2, 1 ] \) and which satisfy
\[ \frac{ \partial^\nu w_i }{\partial \xi^\nu} (\xi) \ll \frac1{ \Omega^\nu } \quad \text{and} \quad \int \! \left| \frac{ \partial^\nu w_i }{\partial \xi^\nu} (\xi) \right| \, d\xi \ll \frac1{ \Omega^{\nu - 1} } \quad \text{for } \nu \geq 1, \]
for some \( \Omega < 1 \).
We will look at the sum
\[ D(x_1, x_2) := \sum_n w_1\left( \frac{r_1 n + f_1}{x_1} \right) w_2\left( \frac{r_2 n + f_2}{x_2} \right) d(r_1 n + f_1) d(r_2 n + f_2), \]
with the aim of showing that it can be written asymptotically as
\[ D(x_1, x_2) = M(x_1, x_2) + R(x_1, x_2), \]
where \( M(x_1, x_2) \) denotes the main term, which has the form
\begin{align} \label{eqn: main term}
  M(x_1, x_2) = \int \! w_1\left( \frac{r_1 \xi + f_1}{x_1} \right) w_2\left( \frac{r_2 \xi + f_2}{x_2} \right) P( \log( r_1 \xi + f_1 ), \log( r_2 \xi + f_2 ) ) \, d\xi
\end{align}
with a quadratic polynomial \( P(\xi_1, \xi_2) \), and where \( R(x_1, x_2) \) forms the error term.
The assumptions we hereby need to make are
\begin{align} \label{eqn: conditions on the parameters}
  f_1 \ll {x_1}^{1 - \varepsilon}, \quad f_2 \ll {x_2}^{1 - \varepsilon} \quad \text{and} \quad h \ll r_2 {x_1}^{1 - \varepsilon} \Omega^2.
\end{align}
We can also assume that
\[ {r_0}^2 {r_1}^2 r_2 \ll x_1, \]
since otherwise our results are trivial.
Furthermore note that from the first two bounds at \eqref{eqn: conditions on the parameters} and the size of the supports of \(w_1\) and \(w_2\), it follows that
\[ r_2 x_1 \asymp r_1 x_2. \]

We will prove the following three bounds for the error term:
\begin{align}
  R(x_1, x_2) &\ll r_0 (r_2 x_1)^{\frac12 + \varepsilon} \left( \frac{ |h|^\theta}{\Omega^\frac12} + (r_2 x_1)^\theta \right), \label{eqn: first main estimate} \\
  R(x_1, x_2) &\ll r_0 (r_2 x_1)^{\frac12 + \varepsilon} \left( \frac1{\Omega^\frac12} + \left( \frac{ (r_0 r_1 r_2, h) x_1 }{ r_0 {r_1}^2 r_2} \right)^\theta \left( 1 + \frac{ (r_0 r_1 r_2, h)^\frac14 |h|^\frac14 }{ {r_0}^\frac14 (r_1 r_2)^\frac12 } \right) \right), \label{eqn: second main estimate} \\
  R(x_1, x_2) &\ll r_0 (r_2 x_1)^{\frac12 + \varepsilon} \left( \frac1{\Omega^\frac12} + \left( \frac{r_2 x_1}{ |h| } \right)^\theta \left( 1 + \frac{ (r_0 r_1 r_2, h)^\frac14 |h|^\frac14 }{ {r_0}^\frac14 (r_1 r_2)^\frac12 } \right) \right). \label{eqn: third main estimate}
\end{align}
Recall that \(r_0\) was defined at \eqref{eqn: definition of r_0}.
From the first bound and the choice \( \Omega = 1 \), we immediately get Theorem \ref{thm: main theorem, smooth version}.
In order to prove Theorem \ref{thm: main theorem}, we choose
\[ \Omega = \frac{ {r_0}^\frac23 {r_1}^\frac23 {r_2}^\frac13 }{ {x_1}^\frac13 }, \]
and use the second bound for
\[ (r_0 r_1 r_2, h) h \ll \frac{ (r_1 r_2)^\frac43 }{ {r_0}^\frac13} \left( \frac{x_1}{r_1} \right)^\frac23 \left( \frac{ {r_0}^2 {r_1}^2 r_2 }{x_1} \right)^{4\theta}, \]
and the third bound for
\[ \frac{ (r_1 r_2)^\frac43 }{ {r_0}^\frac13} \left( \frac{x_1}{r_1} \right)^\frac23 \left( \frac{ {r_0}^2 {r_1}^2 r_2 }{x_1} \right)^{4\theta} \ll (r_0 r_1 r_2, h) h \ll {r_0}^\frac13 {r_1}^\frac43 {r_2}^\frac53 {x_1}^{\frac13 - \varepsilon}. \]
This way we are led to
\[ R(x_1, x_2) \ll (r_0 r_1 r_2, h)^\theta {r_0}^{\frac23 + \theta} (r_1 r_2)^\frac13 \left( \frac{x_1}{r_1} \right)^{\frac23 + \varepsilon}, \]
and Theorem \ref{thm: main theorem} follows by setting \( x_1 = r_1 x \), \( x_2 = r_2 x \) and using suitable weight functions.

Before diving into the proof, we first want to describe a smooth decomposition of the divisor function which was used by Meurman \cite{Meu01} to treat the binary additive divisor problem (and which originally goes back to Heath-Brown).
Let \( v_0 : \mathbb{R} \rightarrow [0, \infty) \) be a smooth and compactly supported function such that
\[ v_0(\xi) = 1 \quad \text{for} \quad |\xi| \leq 1, \quad \text{and} \quad v_0(\xi) = 0 \quad \text{for} \quad |\xi| \geq 2, \]
and set
\[ v(\xi) := v_0\left( \frac\xi{ \sqrt{x_2} } \right) \quad \text{and} \quad h(a, b) := v(a) ( 2 - v(b) ). \]
For \( ab \leq x_2 \), we have that
\[ ( v(a) - 1 )( v(b) - 1 ) = 0, \]
so that for \( n \leq x_2 \), it holds that
\[ d(n) = \sum_{ab = n} v(a) ( 2 - v(b) ) = \sum_{ab = n} h(a, b). \]

It will furthermore be helpful to dyadically split the supports of the variables \(a\) and \(b\).
In order to do so, we choose smooth and compactly supported functions \( h_X : (0, \infty) \rightarrow [0, \infty) \), such that
\[ \supp u_X \subset \left[ \frac X2, 2X \right], \quad \frac{ \partial^\nu u_X }{ \partial \xi^\nu }(\xi) \ll \frac1{X^\nu} \quad \text{and} \quad \sum_X u_X \equiv 1, \]
where the last sum runs over powers of \(2\).
Then we set
\[ h_{AB}(a, b) := h(a, b) u_A(a) u_B(b). \]

Back to our sum -- we split the second divisor function and use the dyadic decomposition described just before so that
\[ D(x_1, x_2) = \sum_{A,B} D_{AB}(x_1, x_2), \]
where
\begin{align*}
  D_{AB}(x_1, x_2) &:= \sum_n w_1\left( \frac{r_1 n + f_1}{x_1} \right) w_2\left( \frac{r_2 n + f_2}{x_2} \right) d(r_1 n + f_1) \sum_\twoln{a, b}{ab = r_2 n + f_2} h_{AB}(a, b) \\
    &= \sum_\twoln{a, b}{ ab \equiv f_2 \smod{r_2} } \tilde f(a, b) d\left( \frac{r_1}{r_2} (ab - f_2) + f_1 \right),
\end{align*}
and
\[ \tilde f(a, b) := w_1\left( \frac{ \frac{r_1}{r_2} ( ab - f_2 ) + f_1 }{x_1} \right) w_2\left( \frac{ab}{x_2} \right) h_{AB}(a, b). \]
Note that the variables \(A\) and \(B\), which run over powers of \(2\), satisfy
\[ AB \asymp x_2, \quad A \ll B \quad \text{and} \quad A \ll {x_2}^\frac12. \]

In the following we have to pay a lot of attention to possible common divisors between the different parameters, and it will be helpful to define for \( i = 1, 2 \),
\[ u_i := (r_i, f_i), \quad s_i := \frac{r_i}{u_i}, \quad g_i := \frac{f_i}{u_i}, \quad \text{and} \quad h := r_1 f_2 - f_1 r_2, \quad h_0 := \frac h{u_1 u_2}. \]
Now, since the product \(ab\) in the above sum must be divisible by \(u_2\), we can write
\begin{align*}
  D_{AB}(x_1, x_2) &= \sum_{u_2^\ast \mid u_2} \sum_\twoln{a}{ (a, s_2 u_2^\ast) = 1 } \sum_\twoln{b}{ ab \equiv g_2 \smod{s_2} } \tilde f\left( \frac{u_2}{u_2^\ast} a, u_2^\ast b \right)  d\left( \frac{r_1}{s_2}(ab - g_2) + f_1 \right).
\end{align*}
Choose \( \tilde a \) and \( \tilde s_2 \) such that
\[ a \tilde a + s_2 \tilde s_2 = 1, \]
so that \(b\) in the above sum has the form
\[ b = \tilde a g_2 + s_2 n \quad \text{with} \quad n \in \mathbb{Z}, \]
and hence
\begin{align*}
  D_{AB}(x_1, x_2) &= \sum_\thrln{u_2^\ast \mid u_2}{a, n}{ (a, s_2 u_2^\ast) = 1 } \tilde f\left( \frac{u_2}{u_2^\ast} a, \frac{u_2^\ast}{u_2 a} ( r_2 (an - g_2 \tilde s_2) + f_2 ) \right) d\left( r_1 (an - g_2 \tilde s_2) + f_1 \right)  \\
    &= \sum_\thrln{u_2^\ast \mid u_2}{a}{ (a, s_2 u_2^\ast) = 1 } \sum_{ n \equiv f_1 - g_2 r_1 \overline{s_2} \smod{r_1 a} } d(n) f(n; a),
\end{align*}
with
\[ f(\xi; a) := w_1\left( \frac \xi{x_1} \right) w_2\left( \frac{ \frac{r_2}{r_1}(\xi - f_1) + f_2 }{x_2} \right) h_{AB}\left( \frac{u_2}{u_2^\ast} a, \frac{u_2^\ast}{u_2 a} \left( \frac{r_2}{r_1}(\xi - f_1) + f_2 \right) \right). \]
Note that the modular inverse \( \overline{s_2} \), which occurs in the congruence condition, is understood to be mod \(a\).
Also note that the support of \( f(\xi; a) \) is given by
\[ \supp f(\bullet; a) \asymp x_1 \quad \text{and} \quad \supp f(\xi; \bullet) \asymp \frac{u_2^\ast}{u_2} A, \]
and that its derivatives can be bound by
\[ \frac{ \partial^{\nu_1 + \nu_2} f }{ \partial \xi^{\nu_1} a^{\nu_2} } (\xi; a) \ll \frac1{ (x_1 \Omega)^{\nu_1} } \left( \frac{u_2}{u_2^\ast A} \right)^{\nu_2} \quad \text{for} \quad \nu_1, \nu_2 \geq 0,\]
while also satisfying
\[ \int \! \left| \frac{ \partial^{\nu_1 + \nu_2} f }{ \partial \xi^{\nu_1} a^{\nu_2} } (\xi; a) \right| \, d\xi \ll \frac1{ (x_1 \Omega)^{\nu_1 - 1} } \left( \frac{u_2}{u_2^\ast A} \right)^{\nu_2} \quad \text{for} \quad \nu_1 \geq 1, \, \nu_2 \geq 0. \]

\subsection{Use of Voronoi summation}

We use Voronoi summation in the form of Theorem \ref{thm: Voronoi summation for d(n) in arithmetic progressions} to treat the divisor sum in arithmetic progressions.
This way we are led to
\[ D_{AB}(x_1, x_2) = \Sigma_{AB}^0 - 2\pi \Sigma_{AB}^+ + 4 \Sigma_{AB}^-, \]
with
\begin{align*}
  \Sigma_{AB}^0 &:= \frac1{r_1} \!\!\! \sum_\thrln{u_2^\ast \mid u_2}{a}{ (a, s_2 u_2^\ast) = 1 } \!\!\! \frac 1a \int \! \lambda_{ f_1 - g_2 r_1 \overline{s_2}, r_1 a }(\xi) f(\xi; a) \, d\xi, \\
  \Sigma_{AB}^\pm &:= \frac1{r_1} \!\!\! \sum_\thrln{u_2^\ast \mid u_2}{a}{ (a, s_2 u_2^\ast) = 1 } \!\!\! \sum_{c \mid r_1 a} \frac ca \sum_{n = 1}^\infty d(n) \frac{ S( f_1 - g_2 r_1 \overline{s_2}, \pm n; c ) }{c^2} \int \! B^\pm\left( \frac{4\pi}c \sqrt{n \xi} \right) f(\xi; a) \, d\xi,
\end{align*}
and
\[ B^+(\xi) := Y_0(\xi) \quad \text{and} \quad B^-(\xi) := K_0(\xi). \]
The main term will be extracted from \( \Sigma_{AB}^0 \), but we will postpone this until the end and take care first of \( \Sigma_{AB}^\pm \).

We reshape these sums a little bit,
\begin{align*}
  \Sigma_{AB}^\pm &= \frac1{r_1} \sum_{u_2^\ast \mid u_2} \sum_\thrln{a, c}{c \mid r_1 a}{ (a, s_2 u_2^\ast) = 1 } (\ldots) = \frac1{r_1} \sum_\twoln{u_2^\ast \mid u_2}{r_1^\ast \mid r_1} \sum_\twoln{d}{ (d, r_1) = r_1^\ast } \sum_\thrln{a, c}{ dc = r_1 a }{ (a, s_2 u_2^\ast) = 1 } (\ldots) \\
    &= \frac1{r_1} \sum_\twoln{u_2^\ast \mid u_2}{r_1^\ast \mid r_1} \sum_\twoln{d}{ (d, r_1^\ast s_2 u_2^\ast) = 1 } \sum_\twoln{c}{ \left( c, s_2 u_2^\ast \right) = 1 } (\ldots),
\end{align*}
where we have to replace \(c\) by \( r_1^\ast c \) and \(a\) by \(dc\), so that
\begin{align*}
  \Sigma_{AB}^\pm &= \sum_\twoln{u_2^\ast \mid u_2}{r_1^\ast \mid r_1} \sum_\twoln{d}{ (d, r_1^\ast s_2 u_2^\ast) = 1 } \frac{ R_{AB}^\pm }d,
\end{align*}
with
\[ R_{AB}^\pm := \sum_\twoln{c}{ \left( c, s_2 u_2^\ast \right) = 1 } \sum_{n = 1}^\infty d(n) \frac{ S(f_1 - g_2 r_1 \overline{s_2}, \pm n; r_1^\ast c) }{ r_1^\ast c } F^\pm(r_1^\ast c; dc, n), \]
and
\[ F^\pm(\eta; a, n) := \frac{r_1^\ast}{\eta r_1} \int \! B^\pm\left( \frac{4\pi}\eta \sqrt{n \xi} \right) f(\xi; a) \, d\xi. \]
As a reminder, the modular inverse \( \overline{s_2} \) occuring in the Kloosterman sum is now understood to be mod \(dc\).

Let
\[ N_0^- := \frac{ {x_1}^\varepsilon }{x_1} {A^\ast}^2, \quad N_0^+ := \frac{ {x_1}^\varepsilon }{x_1 \Omega^2} {A^\ast}^2 \quad \text{and} \quad A^\ast := \frac{u_2^\ast}{u_2} \frac{r_1^\ast A}d. \]
Regarding \( F^\pm(r_1^\ast c; dc, n) \), we have the bounds
\begin{align*}
  F^+(r_1^\ast d; dc, n) &\ll \frac{ (x_1 \Omega)^\frac12 }{n^\frac12} \left( \frac{A^\ast}{ \sqrt{n x_1} \Omega } \right)^{\nu - \frac12}, \\
  F^-(r_1^\ast d; dc, n) &\ll \frac{ {x_1}^\frac12 }{n^\frac12} \left( \frac{A^\ast}{ \sqrt{n x_1} } \right)^{\nu - \frac12},
\end{align*}
which can be shown using \eqref{eqn: consequence of the recurrence relations} resp. \eqref{eqn: bounds for the K-Bessel function}.
With the help of these bounds, it is not hard to see that the sum over \(n\) in \( R_{AB}^\pm \) can be cut at \( N_0^\pm \).
After dyadically dividing the remaining sum, we are left with
\[ R_{AB}^\pm(N) := \sum_\twoln{c}{ \left( c, s_2 u_2^\ast \right) = 1 } \sum_{N < n \leq 2N} d(n) \frac{ S(f_1 - g_2 r_1 \overline{s_2}, \pm n; r_1^\ast c) }{ r_1^\ast c } F^\pm(r_1^\ast c; dc, n). \]

\subsection{Treatment of the Kloosterman sums}

Not surprisingly we would like to treat the sum of Kloosterman sums occuring in \( R_{AB}^\pm(N) \) with the Kuznetsov trace formula.
However, in our situation this does not seem to be possible directly.
To deal with this difficulty, we factor out the part of the variable \( r_1^\ast \) which has the same prime factors as \(s_2 u_2^\ast\),
\[ v := (r_1^\ast, (s_2 u_2^\ast)^\infty), \quad t_1 := \frac{ r_1^\ast }v, \]
and use the twisted multiplicativity of Kloosterman sums,
\[ \frac { S(f_1 - g_2 r_1 \overline{s_2}, \pm n; r_1^\ast c) }{ r_1^\ast c } = \frac{ S\left( f_1 \overline{c t_1}, \pm n \overline{c t_1}; v \right) }v \frac { S\left( h_0 u_1, \pm n \overline{v}^2 \overline{s_2}; c t_1 \right) }{ c t_1 }. \] 
Here, all the modular inverses are finally understood to be modulo the respective modulus of the Kloosterman sum.
Obviously the first factor still depends on \(c\), but here we follow an idea of Blomer and Mili\'{c}evi\'{c} \cite{BM15} and use Dirichlet characters to separate this variable.
We define
\[ \hat S_v(\chi; n) := \sum_\twoln{y \smod v}{ (y, v) = 1 } \overline\chi(y) \frac{ S(f_1 \overline y, \pm n \overline y; v) }v, \]
where \(\chi\) is a Dirichlet character modulo \(v\), so that by the orthogonality relations of Dirichlet characters we have that
\[ \frac{ S\left( f_1 \overline{c t_1}, \pm n \overline{c t_1}; v \right) }v = \frac1{ \varphi(v) } \sum_{\chi \bmod v} \overline\chi(c t_1) \hat S_v(\overline\chi; n), \]
where the sum runs over all Dirichlet characters modulo \(v\).
Hence
\[ R_{AB}^\pm(N) = \frac1{ \varphi(v) } \sum_{\chi \bmod v} \overline\chi(t_1) R_{AB}^\pm(N; \chi), \]
with
\begin{align*}
  R_{AB}^\pm(N; \chi) &:=  \sum_{N < n \leq 2N} d(n) \hat S_v(\overline\chi; n) K_{AB}^\pm(\chi; n), \\
\intertext{and}
  K_{AB}^\pm(n; \chi) &:= \sum_\twoln{c}{ (c, s_2 u_2^\ast) = 1 } \frac{ S\left( h_0 u_1 u_2^\ast, \pm n \overline{s_2 u_2^\ast v^2}; t_1 c \right) }{t_1 c} \overline\chi(c) F^\pm( r_1^\ast c; dc, n ).
\end{align*}

Of course it is important to have good bounds for \( \hat S_v(\chi; n) \).
Directly using Weil's bound for Kloosterman sums we get
\[ \vcent{ \hat S_v(\chi; n) } \ll (f_1, n, v)^\frac12 v^{\frac12 + \varepsilon}, \]
however this can be improved with a little bit of effort.
More precisely, we will prove
\begin{align} \label{eqn: bound for hat S}
  \hat S_v(\chi; n) \ll \left( f_1, n, \frac v{ \operatorname{cond}(\chi) } \right) v^\varepsilon,
\end{align}
where \( \operatorname{cond}(\chi) \) is the conducter of \(\chi\).
The sum actually vanishes in a lot of cases, in particular when \(f_1\), \(n\) and \(v\) have certain common factors, but this result will be sufficient for our purposes.
At this point we also want to mention that
\begin{align} \label{eqn: sum over conductors}
  \frac1{ \varphi(v) } \sum_{\chi \bmod v} \frac v{ \operatorname{cond}(\chi) } &= \frac v{ \varphi(v) } \sum_{v^\ast \mid v} \frac1{v^\ast} \sum_\twoln{\chi \bmod v}{ \operatorname{cond}(\chi) = v^\ast } 1 \leq \frac v{ \varphi(v) } d(v) \ll v^\varepsilon,
\end{align}
which will be important later.

In order to prove \eqref{eqn: bound for hat S}, note first that \( \hat S_v(\chi; n) \) is quasi-multiplicative in the sense that if \( v = v_1 v_2 \) with coprime \(v_1\) and \(v_2\), and \( \chi = \chi_1 \chi_2 \) with the corresponding Dirichlet characters \( \chi_1 \) (mod \(v_1\)) and \( \chi_2 \) (mod \(v_2\)), then
\[ \hat S_v(\chi; n) = \chi_1(v_2) \chi_2(v_1) \hat S_{v_1}( \chi_1; n ) \hat S_{v_2}( \chi_2; n ). \]
It is therefore enough to look at the case where \(v\) is a prime power \( v = p^k \).

Assume first that \( \chi = \chi_0 \) is the principal character.
For \( v = p \) we have
\[ \hat S_p(\chi; n) = \frac1p \sum_\twoln{ x, y \smod p}{ (x, p) = 1 } \e\left( \frac{ y( f_1 x \pm n \overline x ) }p \right) - \frac{ \varphi(p) }p = \sum_\twoln{x \smod p}{ f_1 x \pm n \overline x \equiv 0 \smod p } 1 - \frac{ \varphi(p) }p \ll (f_1, n, p), \]
and for prime powers \( v = p^k \), \( k \geq 2 \), we have
\begin{align*}
  \hat S_{p^k}(\chi; n) &= \frac1{p^k} \sum_\twoln{ x, y \smod{p^k} }{ (x, p) = 1 } \e\left( \frac{ y(f_1 x \pm n \overline x) }{p^k} \right) - \frac1{p^k} \sum_\twoln{ x \smod{p^k}, y \smod{ p^{k - 1} } }{ (x, p) = 1 } \e\left( \frac{ y(f_1 x \pm n \overline x) }{ p^{k - 1} } \right) \\
    &= \#\left\{ x \smod{p^k} \middle| f_1 x \pm n \overline x \equiv 0 \smod{p^k} \right\} - \frac1p \#\left\{ x \smod{p^k} \middle| f_1 x \pm n \overline x \equiv 0 \smod{ p^{k - 1} } \right\} \\
    &\ll \left( f_1, n, p^k \right).
\end{align*}

In the following we can now assume that \(\chi\) is non-principal.
For \( v = p \) prime this means that \(\chi\) is primitive and hence
\begin{align*}
  \hat S_p(\chi; n) &= \frac1p \sum_\twoln{x, y \smod p}{ (xy, p) = 1 } \chi(y) \e\left( \frac{ y( f_1 x \pm n \overline x ) }p \right) \\
    &= \frac1p \sum_\twoln{x, y \smod p}{ f_1 x \pm n \overline x \not\equiv 0 \smod p } \chi(y) \overline\chi( f_1 x \pm n \overline x ) \e\left( \frac yp \right) - \frac1p \sum_\twoln{x, y \smod p}{ f_1 x \pm n \overline x \equiv 0 \smod p } \chi(y) \\
    &= \frac{ \tau(\chi) }p \left( \vcent{ \sum_\twoln{x \smod p}{ (x, p) = 1 } \overline\chi( f_1 x \pm n \overline x ) } \right) \\
    &\ll 1,
\end{align*}
where we have used the fact that both the Gau{\ss} sum \( \tau(\chi) \) and the character sum on the right are bounded by \( \BigO{\sqrt p} \), which is well-known for the former and follows from Weil's work for the latter (see e.g. \cite[Theorem 11.23]{IK04} or \cite[Chapter 6, Theorem 3]{Li96}).

It remains to look at the case of \(\chi\) having modulus \( v = p^k \), \( k \geq 2 \), which is slightly more complicated.
Let \(\chi\) be induced by the primitive character \( \chi^\ast \) of modulus \( v^\ast = p^{ k^\ast } \), and set \( v^\circ := p^{k - k^\ast} \).
In our sum
\[ \hat S_{p^k}(\chi; n) = \frac1{p^k} \sum_\twoln{ x \smod{p^k} }{ \left( x, p^k \right) = 1 } \sum_{ y \smod{p^k} } \chi(y) \e\left( \frac{ y(f_1 x \pm n \overline x) }{p^k} \right) \]
we parametrize \(y\) by
\[ y = y_1 + v^\ast y_2, \quad \text{with} \quad y_1 \bmod v^\ast \quad \text{and} \quad y_2 \bmod v^\circ. \]
Then
\begin{align*}
  \hat S_{p^k}(\chi; n) &= \frac1v \sum_\twoln{x \smod v}{ (x, v) = 1 } \sum_{ y_1 \smod{v^\ast} } \chi^\ast(y_1) \e\left( \frac{ y_1(f_1 x \pm n \overline x) }v \right) \sum_{ y_2 \smod{ v^\circ } } \e\left( \frac{ y_2 (f_1 x \pm n \overline x) }{v^\circ} \right) \\
    &= \frac1{v^\ast} \sum_\thrln{x \smod v}{ (x, v) = 1 }{ f_1 x \pm n \overline x \equiv 0 \smod{ v^\circ } } \sum_{ y_1 \smod{v^\ast} } \chi^\ast(y_1) \e\left( \frac{ y_1(f_1 x \pm n \overline x) }v \right) \\
    &= \frac{ \tau(\chi^\ast) }{v^\ast} \sum_\thrln{x \smod v}{ (x, v) = 1 }{ f_1 x \pm n \overline x \equiv 0 \smod{ v^\circ } } \overline{\chi^\ast}\left( \frac{ f_1 x \pm n \overline x }{v^\circ} \right).
\end{align*}
We set
\[ \tilde v^\circ := \frac{v^\circ}{ (f_1, n, v^\circ) }, \quad \tilde v := v^\ast \tilde v^\circ, \quad \tilde f_1 := \frac{f_1}{ (f_1, n, v^\circ) } \quad \text{and} \quad \tilde n := \frac n{ (f_1, n, v^\circ) }, \]
and the sum becomes
\[ \hat S_{p^k}(\chi; n) = (f_1, n, v^\circ) \frac{ \tau(\chi^\ast) }{v^\ast} \sum_\twoln{ x \smod{\tilde v} }{ \tilde f_1 x \pm \tilde n \overline x \equiv 0 \smod{ \tilde v^\circ } } \overline{\chi^\ast}\left( \frac{ \tilde f_1 x \pm \tilde n \overline x }{\tilde v^\circ} \right). \]
If \( \tilde v^\circ = 1 \), we have square-root cancellation for the character sum on the right (see \cite[Theorem 2]{YZH03}), so that \( \hat S_{p^k}(\chi; n) \ll (f_1, n, v^\circ) \).

Otherwise note that both \( \tilde f_1 \) and \( \tilde n \) have to be coprime with \(p\), as otherwise the sum is empty.
We parametrize \(x\) by
\[ x = x_1 \left( 1 + \tilde v^\circ x_2 \right), \quad \text{with} \quad x_1 \bmod \tilde v^\circ, \quad \left( x_1, \tilde v^\circ \right) = 1 \quad \text{and} \quad x_2 \bmod v^\ast. \]
In this case we can write \( \overline x \bmod \tilde v \) in the following way
\[ \overline x \equiv \overline{x_1} \left( 1 - \tilde v^\circ x_2 \overline{ (1 + \tilde v^\circ x_2) } \right) \bmod \tilde v, \]
and after putting this in our sum, we have
\[ \hat S_{p^k}(\chi; n) = (f_1, n, v^\circ) \frac{ \tau(\chi^\ast) }{v^\ast} \sum_\twoln{x_1 \smod{ \tilde v^\circ} }{ \tilde f_1 x_1 \pm \tilde n \overline{x_1} \equiv 0 \smod{ \tilde v^\circ } } \sum_{ x_2 \smod{v^\ast} } \overline{\chi^\ast}( P(x_2) ), \]
where \( P(X) \) is the rational function
\[ P(X) := \frac{ \tilde f_1 x_1 \tilde v^\circ X^2 + 2 \tilde f_1 x_1 X + \frac{ \tilde f_1 x_1 \pm \tilde n \overline{x_1} }{\tilde v^\circ} }{ \tilde v^\circ X + 1 }. \]

If \( p \geq 3 \), we can use \cite[Theorem 1.1]{CLZh03} to get that
\[ \sum_{ x_2 \smod{v^\ast} } \overline{\chi^\ast}( P(x_2) ) \ll 1. \]
If \( p = 2 \) and \( \tilde v^\circ \geq 8 \), we rewrite this sum
\[ \sum_{ x_2 \smod{v^\ast} } \overline{\chi^\ast}( P(x_2) ) = \sum_{ x_2 \smod{2 v^\ast} } \overline{\chi^\ast}\left( P\left( \frac{x_2}2 \right) \right) = 2 \sum_{ x_2 \smod{v^\ast} } \overline{\chi^\ast}\left( P\left( \frac{x_2}2 \right) \right), \]
so that we can again apply the cited theorem to show that this sum is \( \BigO{1} \).
Finally for the remaining cases \( \tilde v^\circ = 2 \) and \( \tilde v^\circ = 4 \), we can use \cite[Theorem 2.1]{CLZh03} to show square-root cancellation.
This concludes the proof of \eqref{eqn: bound for hat S}.

\subsection{Auxiliary estimates}

We want to use the Kuznetsov trace formula in the form \eqref{eqn: Kuznetsov formula for q = rs} with
\[ \tilde q := t_1 s_2 u_2^\ast v^2, \quad \tilde r := s_2 u_2^\ast v^2, \quad \tilde s := t_1 \quad \tilde q_0 := v, \quad \tilde m := h_0 u_1 u_2^\ast, \quad \tilde n := n. \]
However, before we can do so some technical arrangements have to be made.
Set
\[ \tilde F^\pm (c; n) := h(n) \frac{r_1^\ast}{r_1} \frac{ v \sqrt{s_2 u_2^\ast} }{4\pi} \sqrt{ \frac{r_2}{n|h|} } \int \! c B^\pm\left( c \sqrt{ \xi \frac{r_2}{ |h| } } \right) f\left( \xi; 4\pi \frac{ d \sqrt{n} }{r_1^\ast c} \sqrt{ \frac{ |h| }{r_2} } \right) \, d\xi, \]
where \(h\) is a smooth and compactly supported bump function such that
\[ h(n) = 1 \quad \text{for} \quad n \in [N, 2N], \quad \supp h \asymp N \quad \text{and} \quad h^{ (\nu) }(n) \ll \frac1{N^\nu}. \]
We have defined this function in such a way that
\[ F^\pm( r_1^\ast c; dc, n ) = \frac1{ \sqrt{\tilde r} } \tilde F^\pm\left( \frac{ 4\pi \sqrt{ |\tilde m \tilde n| } }{ \sqrt{\tilde r} \tilde s c }; n \right) \quad \text{for} \quad n \in [N, 2N]. \]
Note that
\[ \supp \tilde F^\pm(\bullet; n) \asymp C := \frac1{A^\ast} \sqrt{ \frac{ N |h| }{r_2} }, \quad \tilde F^\pm(c; n) \ll v \sqrt{s_2 u_2^\ast} \frac{ r_1^\ast }{A^\ast r_1} {x_1}^{1 + \varepsilon}. \]
We need to seperate the variable \(n\) to be able to use the large sieve inequalities later, and to this end we make use of Fourier inversion,
\[ \tilde F^\pm(c; n) = \int \! G_0(\lambda) G_\lambda^\pm(c) e(\lambda n) \, d\lambda, \quad G_\lambda^\pm(c) := \frac1{ G_0(\lambda) } \int \! \tilde F^\pm(c; n) e(-\lambda n) \, dn, \]
where
\[ G_0(\lambda) := v \sqrt{s_2 u_2^\ast} \frac{r_1^\ast}{A^\ast r_1} {x_1}^{1 + \varepsilon} \min\left( N, \frac1{N \lambda^2} \right). \]
Eventually, our sum of Kloosterman sums looks like
\[ K_{AB}^\pm(\chi; n) := \int \! G_0(\lambda) e(\lambda n) \sum_\twoln{c}{ (c, \tilde r) = 1 } \overline\chi(c) \frac{ S\left( \tilde m, \pm \tilde n \overline{\tilde r}; \tilde s c \right) }{ c \tilde s \sqrt{\tilde r} } \tilde G_\lambda^\pm\left( 4\pi \frac{ \sqrt{ |\tilde m \tilde n| } }{ c \tilde s \sqrt{\tilde r} } \right) \, d\lambda. \]

Next, we need to find good estimates for the Bessel transforms occuring in the Kuznetsov formula.
For convenience set
\[ C := \frac1{A^\ast} \sqrt{ \frac{ |h| N }{r_2} } \quad \text{and} \quad Z := \frac1{A^\ast} \sqrt{x_1 N}. \]
Note that due to the assumptions made at \eqref{eqn: conditions on the parameters}, it is true that \( C \ll 1 \).

\begin{lemma} \label{lemma: bounds for the Bessel transforms of G}
  If \( N \ll N_0^- \), we have
    \begin{align}
      \tilde G_\lambda^\pm(it), \check G_\lambda^\pm(it) &\ll C^{-2t} &\text{for} \quad 0 \leq t < \frac14, \label{eqn: First bound for the Bessel transforms of G} \\
      \frac{ \tilde G_\lambda^\pm(t) }{ (1 + t)^\kappa }, \check G_\lambda^\pm(t), \dot G_\lambda^\pm(t) &\ll \frac{ {x_1}^\varepsilon }{1 + t^\frac52} &\text{for} \quad t \geq 0. \label{eqn: Second bound for the Bessel transforms of G}
    \end{align}
  If \( N_0^- \ll N \ll N_0^+ \), we have for any \( \nu \geq 0 \),
    \begin{align}
      \tilde G_\lambda^\pm(it), \check G_\lambda^\pm(it) &\ll {x_1}^{-\nu} &\text{for} \quad 0 \leq t < \frac14, \label{eqn: Third bound for the Bessel transforms of G} \\
      \frac{ \tilde G_\lambda^\pm(t) }{ (1 + t)^\kappa }, \check G_\lambda^\pm(t), \dot G_\lambda^\pm(t) &\ll \frac{ {x_1}^\varepsilon }{ Z^\frac52 } \left( \frac Zt \right)^\nu &\text{for} \quad t \geq 0. \label{eqn: Fourth bound for the Bessel transforms of G}
    \end{align}
\end{lemma}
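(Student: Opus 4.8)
The plan is to deduce the four bounds from the general estimates for Bessel transforms in Lemmas~\ref{lemma: estimates for the Bessel transforms} and~\ref{lemma: estimates for the Bessel transforms of oscillating functions for large alpha}, applied to the function $G_\lambda^\pm$. The preliminary step is to control $G_\lambda^\pm$ itself: since $G_\lambda^\pm(c)=\frac1{G_0(\lambda)}\int\tilde F^\pm(c;n)\,e(-\lambda n)\,dn$, repeated integration by parts in $n$ — which is exactly what the factor $\min\!\big(N,(N\lambda^2)^{-1}\big)$ built into $G_0(\lambda)$ records — together with the size and support bounds for $\tilde F^\pm(c;n)$ and its derivatives established above, shows that $G_\lambda^\pm$ is smooth, supported in an interval of length $\asymp C$, of size $\ll x_1^\varepsilon$, and with derivatives $\ll x_1^\varepsilon C^{-\nu}$ — provided the argument $\asymp Z$ of the Voronoi Bessel function $B^\pm$ (which is $Y_0$ for $+$ and $K_0$ for $-$) is essentially bounded; when it is large, $G_\lambda^\pm$ behaves differently and has to be treated separately. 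Recall that $C\ll1$ by~\eqref{eqn: conditions on the parameters}, that $N\ll N_0^-$ is equivalent to $Z\ll x_1^\varepsilon$, and that $N_0^-\ll N\ll N_0^+$ is equivalent to $x_1^\varepsilon\ll Z\ll x_1^\varepsilon/\Omega$ (this range being empty unless $\Omega<1$). The whole argument is therefore a case distinction according to the size of $Z$, along the lines of the corresponding estimates in~\cite{Top15}.

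Consider first the regime $N\ll N_0^-$. Here the argument $\asymp Z$ of $B^\pm$ is essentially bounded, so by~\eqref{eqn: bounds for the K-Bessel function} and the displayed bounds for $Y_0$ the Bessel function and each of its derivatives contribute only a factor $\ll x_1^\varepsilon$; hence $G_\lambda^\pm$ is a smooth function supported in an interval $\asymp C$ with $\nu$-th derivative $\ll x_1^\varepsilon C^{-\nu}$ for every $\nu\ge0$. Applying Lemma~\ref{lemma: estimates for the Bessel transforms} with $X=Y\asymp C$ (so that $X/Y\asymp1$ and $X\ll1$) and absorbing the $x_1^\varepsilon$, its first bound gives $\tilde G_\lambda^\pm(it),\check G_\lambda^\pm(it)\ll(1+C^{-2t})/(1+C)\ll C^{-2t}$, which is~\eqref{eqn: First bound for the Bessel transforms of G}; its second bound gives $\ll x_1^\varepsilon$ for $0\le t\ll1$, and its third bound gives $\ll t^{-5/2}+C\,t^{-3}\ll t^{-5/2}$ for $t\gg1$, and combining these yields~\eqref{eqn: Second bound for the Bessel transforms of G}.

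Now consider the regime $N_0^-\ll N\ll N_0^+$, so that $Z$ is large. For the minus sign there is nothing to do: throughout the support of $\tilde F^-(\,\cdot\,;n)$ the argument $\asymp Z$ of $K_0$ satisfies $Z\gg x_1^\varepsilon\gg\log x_1$, so by~\eqref{eqn: bounds for the K-Bessel function} (and the analogous bounds for its derivatives) $K_0$ contributes a factor $\ll e^{-Z}\ll x_1^{-A}$ for every $A$; hence $\tilde F^-$, $G_\lambda^-$ and all its Bessel transforms are negligible, and~\eqref{eqn: Third bound for the Bessel transforms of G},~\eqref{eqn: Fourth bound for the Bessel transforms of G} hold trivially. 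For the plus sign the argument $\asymp Z$ of $Y_0$ is large, so I would insert the expansion $Y_0(\xi)=2\Re\!\big(e(\xi/2\pi)\,v_Y(\xi/\pi)\big)$ from Lemma~\ref{lemma: Bessel functions as oscillating functions}; the $\xi$-integral defining $\tilde F^+$ then becomes an oscillatory integral with phase $\frac c{2\pi}\sqrt{\xi r_2/|h|}$, and — using the amplitude decay $v_Y(\xi)\ll\xi^{-1/2}$ of $Y_0$ together with the fact that this phase runs through $\asymp Z$ full periods over the support while the weight $f(\,\cdot\,;\,\cdot\,)$ has bounded total variation in $\xi$ — one finds that, up to a negligible error, $G_\lambda^+(c)=2\Re\!\big(e(\alpha c)\,w(c)\big)$ with frequency $\alpha\asymp Z/C$ and with $w$ supported in an interval $\asymp C$, smooth at scale $\asymp C$, and of size $\ll x_1^\varepsilon Z^{-3/2}$. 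Applying Lemma~\ref{lemma: estimates for the Bessel transforms of oscillating functions for large alpha} with $X\asymp C\ll1$ and $\alpha X\asymp Z\gg x_1^\varepsilon$ and scaling by the amplitude size, the bound~\eqref{eqn: second estimate for the bessel transforms for osciallting functions} yields $\ll\frac{x_1^\varepsilon}{Z^{3/2}}\cdot\frac1Z\big(\tfrac Zt\big)^\nu=\frac{x_1^\varepsilon}{Z^{5/2}}\big(\tfrac Zt\big)^\nu$, which is~\eqref{eqn: Fourth bound for the Bessel transforms of G}; and the bound~\eqref{eqn: first estimate for the bessel transforms for osciallting functions} yields $\ll\frac{x_1^\varepsilon}{Z^{3/2}}\,C^{-2t+\varepsilon}\big(C^\mu+Z^{-\nu}\big)$ for $0<t\le\frac14$, which — on taking $\nu$ large (so the $Z^{-\nu}$-term is negligible, since $Z\gg x_1^\varepsilon$) and either taking $\mu$ large (when $C$ is bounded by a negative power of $x_1$) or else observing that $\Omega Z$ is then a positive power of $x_1$, so that integrating the $\xi$-integral by parts already makes $\tilde F^+$ negligible — is $\ll x_1^{-A}$ for every $A$, giving~\eqref{eqn: Third bound for the Bessel transforms of G}.

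I expect this last regime — setting up and bounding the oscillatory integral that defines $\tilde F^+$ once $Y_0$ oscillates, and getting the power of $Z$ sharp — to be the main obstacle; the remaining ingredients (the estimates for $\tilde F^\pm$ and its derivatives, and the routine reductions preceding the application of the two Bessel-transform lemmas) go through as in the analogous situations in~\cite{Top15}.
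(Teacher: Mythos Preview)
Your treatment of the range $N\ll N_0^-$ and of the $K_0$-part for $N_0^-\ll N\ll N_0^+$ is fine and coincides with the paper's argument.

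The gap is in the $Y_0$-part for $N_0^-\ll N\ll N_0^+$. After inserting Lemma~\ref{lemma: Bessel functions as oscillating functions} you claim that, up to negligible error, $G_\lambda^+(c)=2\Re\big(e(\alpha c)\,w(c)\big)$ with a \emph{fixed} frequency $\alpha\asymp Z/C$ and with $w$ smooth at scale $C$. This is false. The phase $\tfrac{c}{2\pi}\sqrt{\xi r_2/|h|}$ is linear in $c$ but with a slope $\alpha(\xi)=\tfrac1{2\pi}\sqrt{\xi r_2/|h|}$ that depends on $\xi$; as $\xi$ varies over its support of length $\asymp x_1$, the product $\alpha(\xi)\cdot C$ ranges over an interval of length $\asymp Z$. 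Hence, factoring out any single frequency $\alpha_0$ and writing $w(c):=e(-\alpha_0 c)\,G_\lambda^+(c)$, differentiation in $c$ under the $\xi$-integral produces factors $\alpha(\xi)-\alpha_0$ of size up to $Z/C$, so that $w^{(\nu)}\ll|w|\,(Z/C)^\nu$ rather than $|w|\,C^{-\nu}$. The hypothesis $w^{(\nu)}\ll X^{-\nu}$ of Lemma~\ref{lemma: estimates for the Bessel transforms of oscillating functions for large alpha} therefore fails, and you cannot apply that lemma to $G_\lambda^+$ directly. (Your numerology $Z^{-3/2}\cdot Z^{-1}=Z^{-5/2}$ happens to be right, but the route to it is not.)

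The paper avoids this by interchanging the order of integration \emph{before} appealing to Lemma~\ref{lemma: estimates for the Bessel transforms of oscillating functions for large alpha}: one first integrates by parts once in $\xi$ (which removes the explicit factor $c$ from the amplitude and produces a weight $\tilde w(c)=\partial_\xi\big(\sqrt\xi\,v_Y(\tfrac c\pi\sqrt{\xi r_2/|h|})\,f(\xi;\cdot)\big)$ of size $\ll\omega(\xi)\,x_1^{-1/2}Z^{-1/2}$), and then applies the lemma to $H_2(c):=e\big(\tfrac{c}{2\pi}\sqrt{\xi r_2/|h|}\big)\,\tilde w(c)$ for each \emph{fixed} $\xi$ and $n$. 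For fixed $\xi$ this function \emph{is} of the required form (single frequency, weight smooth at scale $C$), so the lemma applies with $X\asymp C$, $\alpha X\asymp Z$ and yields bounds uniform in $\xi$; integrating these over $\xi$ (using $\int\omega(\xi)\,d\xi\ll x_1$) and over $n$ then gives \eqref{eqn: Third bound for the Bessel transforms of G} and \eqref{eqn: Fourth bound for the Bessel transforms of G}.
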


\begin{proof}
  Since all occuring integrals can be interchanged, we can look directly at the Bessel transforms inside \( \tilde F^\pm(c, n) \) and their first two partial derivatives in \(n\).
  We will confine ourselves with the treatment of \( \tilde F^\pm(c, n) \), since the corresponding estimates for the derivatives can be shown the same way.
  
  First we want to prove the first two bounds, which hold when \( N \ll N_0^- \).
  Here again, we can look directly at the function inside the integral over \(\xi\), given by
  \[ H_1(c) := c B^\pm\left( c \sqrt{ \frac{\xi r_2}{ |h| } } \right) f\left( \xi; 4\pi \frac d{r_1^\ast c} \sqrt{ \frac{ n |h| }{r_2} } \right). \]
  We have that
  \[ \supp H_1 \asymp C \quad \text{and} \quad H_1^{ (\nu) }(c) \ll {x_1}^\varepsilon C \left( \frac{ {x_1}^\varepsilon }C \right)^\nu, \]
  so that by Lemma \ref{lemma: estimates for the Bessel transforms},
  \begin{align*}
    \tilde H_\lambda^\pm(it), \check H_\lambda^\pm(it) &\ll C^{1 - 2t} &\text{for} \quad 0 \leq t < \frac14, \\
    \frac{ \tilde H_\lambda^\pm(t) }{ (1 + t)^\kappa }, \check H_\lambda^\pm(t), \dot H_\lambda^\pm(t) &\ll \frac{ {x_1}^\varepsilon C  }{1 + t^\frac52} &\text{for} \quad t \geq 0,
  \end{align*}
  from which we get \eqref{eqn: First bound for the Bessel transforms of G} and \eqref{eqn: Second bound for the Bessel transforms of G}.
  
  Now assume \( N_0^- \ll N \ll N_0^+ \).
  By using Lemma \ref{lemma: Bessel functions as oscillating functions} and partially integrating once over \(\xi\), we get
  \[ \tilde F^+(c) = \frac1\pi \frac{ h(n) }{ \sqrt{n} } \frac{ r_1^\ast v \sqrt{s_2 u_2^\ast} }{r_1} \Im \left( \int \! \e\left( \frac c{2\pi} \sqrt{ \frac{\xi r_2}{ |h| } } \right) \tilde w(c) \, d\xi \right) \]
  with
  \[ \tilde w(c) := \frac\partial{\partial \xi} \left( \sqrt\xi v_Y\left( \frac c\pi \sqrt{ \frac{\xi r_2}{ |h| } } \right) f\left( \xi; 4\pi \frac d{r_2^\ast c} \sqrt{ \frac{ n |h| }{r_2} } \right) \right). \]
  It is hence enough to look at
  \[ H_2(c) := \e\left( \frac c{2\pi} \sqrt{ \frac{\xi r_2}{ |h| } } \right) \tilde w(c). \]
  Note that
  \[ \supp \tilde w \asymp C, \quad \text{and} \quad \tilde w^{ (\nu) } \ll \frac{ \omega(\xi) }{ {x_1}^\frac12 Z^\frac12 } \frac1{C^\nu}, \]
  where
  \[ \omega(\xi) := 1 + \left| w_1'\left( \frac\xi{x_1} \right) \right| + \left| w_2'\left( \frac{ \frac{r_2}{r_1} (\xi - f_1) + f_2 }{x_2} \right) \right|. \]
  Here we use Lemma \ref{lemma: estimates for the Bessel transforms of oscillating functions for large alpha} with \( \alpha = \sqrt{ \frac{\xi r_2}{ |h| } } \) and \( X = C \).
  This is possible as
  \[ \alpha X \gg \frac{ ( x_1 N_0^-)^\frac12 }{ A^\ast } \gg {x_1}^\varepsilon, \]
  and we get
  \begin{align*}
    \tilde H_\lambda^\pm(it), \check H_\lambda^\pm(it) &\ll {x_1}^{-\nu} &\text{for} \quad 0 \leq t < \frac14, \\
    \frac{ \tilde H_\lambda^\pm(t) }{ (1 + t)^\kappa }, \check H_\lambda^\pm(t), \dot H_\lambda^\pm(t) &\ll \frac{ {x_1}^\varepsilon }{ {x_1}^\frac12 Z^\frac32 } \left( \frac Zt \right)^\nu &\text{for} \quad t \geq 0,
  \end{align*}
  and \eqref{eqn: Third bound for the Bessel transforms of G} and \eqref{eqn: Fourth bound for the Bessel transforms of G} follow immediately.
\end{proof}

\subsection{Use of the Kuznetsov trace formula}

Here we will only look at \( K_{AB}^+(\chi; n) \) and we will assume that \( h > 0 \), since all other cases can be treated in essentially the same way.

A use of Theorem \ref{thm: Kuznetsov trace formula} gives
\[ R_{AB}^+(N; \chi) = \int \! G_0(\lambda) \left( \Xi_1(N) + \Xi_2(N) + \Xi_3(N) \right) \, d\lambda, \]
where
\begin{align*}
  \Xi_1(N) &:= \sum_{j = 1}^\infty \frac{ \tilde G_\lambda^+(t_j) }{ ( 1 + |t_j| )^\kappa } \left( \frac{ ( 1 + |t_j| )^\frac\kappa2 }{ \sqrt{ \cosh(\pi t_j) } } \overline{ \rho_j }(\tilde m, \infty) \sqrt{\tilde m} \right) \Sigma_j^{ (1) }(N), \\
  \Xi_2(N) &:= \sum_{ \mathfrak{c} \text{ sing.} } \frac1{4\pi} \int_{-\infty}^\infty \! \frac{ \tilde G_\lambda^+(r) }{ ( 1 + |t| )^\kappa } \left( \frac{ ( 1 + |t| )^\frac\kappa2 }{ \sqrt{ \cosh(\pi r) } } \overline{ \varphi_{ \mathfrak{c}, r } } \left( \tilde m, \infty \right) \sqrt{\tilde m} \right) \Sigma_{ \mathfrak{c}, r }^{ (2) }(N) \, dr, \\
  \Xi_3(N) &:= \sum_\twoln{ k \equiv \kappa \smod 2, \,\, k > \kappa }{ 1 \leq j \leq \theta_k(q, \chi) } \dot G_\lambda^+(k) \left( \sqrt{ (k - 1)! } \, \overline{ \psi_{j, k} }(\tilde m, \infty) \sqrt{\tilde m} \right) \Sigma_{j, k}^{ (3) }(N),
\end{align*}
with
\begin{align*}
  \Sigma_j^{ (1) }(N) &:= \frac{ ( 1 + |t_j| )^\frac\kappa2 }{ \sqrt{ \cosh(\pi t_j) } } \sum_{N < n \leq 2N} d(n) \hat S_v(\overline\chi; n) \e\left( \lambda n - n \frac{ \overline{\tilde s} }{\tilde r} \right) \rho_j\left( n, \frac1{\tilde s} \right) \sqrt{n}, \\
  \Sigma_{ \mathfrak{c}, r }^{ (2) }(N) &:= \frac{ ( 1 + |t| )^\frac\kappa2 }{ \sqrt{ \cosh(\pi t) } } \sum_{N < n \leq 2N} d(n) \hat S_v(\overline\chi; n) \e\left( \lambda n - n \frac{ \overline{\tilde s} }{\tilde r} \right) \varphi_{ \mathfrak{c}, r }\left( n, \frac1{\tilde s} \right) \sqrt{n}, \\
  \Sigma_{j, k}^{ (3) }(N) &:= \sqrt{ (k - 1)! } \sum_{N < n \leq 2N} d(n) \hat S_v(\overline\chi; n) \e\left( \lambda n - n \frac{ \overline{\tilde s} }{\tilde r} \right) \psi_{j, k}\left( n, \frac1{\tilde s} \right) \sqrt{n}.
\end{align*}

Assume first that \( N \ll N_0^- \).
We divide \( \Xi_1(N) \) into three parts:
\[ \Xi_1(N) = \sum_{t_j \leq 1} (\ldots) + \sum_{t_j > 1} (\ldots) + \sum_{ t_j \text{ exc.} } (\ldots)  =: \Xi_{ 1 \text{a} }(N) + \Xi_{ 1 \text{b} }(N) + \Xi_{ 1 \text{c} }(N). \]
We use Cauchy-Schwarz on \( \Xi_{ 1 \text{a} }(N) \), and then Lemma \ref{lemma: bounds for the Bessel transforms of G}, Theorem \ref{thm: large sieve inequalities} and Lemma \ref{lemma: estimates for Fourier coefficients, II} to bound the different factors, which leads to
\begin{align*}
  \Xi_{ 1 \text{a} }(N) &\leq \max_{t_j \leq 1} \left| \frac{ \tilde G_\lambda^+(t_j) }{ (1 + t_j)^\kappa } \right| \left( \vcent{ \sum_{t_j \leq 1} \frac{ (1 + t_j)^\kappa  }{ \cosh(\pi t_j) } | \rho_j(\tilde m, \infty) |^2 \tilde m } \right)^\frac12 \left( \vcent{ \sum_{t_j \leq 1} \left| \Sigma_j^{ (1) }(N) \right|^2 } \right)^\frac12 \\
    &\ll {x_1}^\varepsilon \tilde m^\theta \left( 1 + {\tilde q_0}^\frac12 \frac N{\tilde q} \right)^\frac12 \left( N^\varepsilon v^\varepsilon \sum_{N < n \leq 2N} ( f_1, n, v^\circ )^2 \right)^\frac12 \\
    &\ll {v^\circ}^\frac12 {x_1}^\varepsilon h^\theta \frac{A^\ast}{x_1} \left( {x_1}^\frac12 + \frac{A^\ast}{ v^\frac14 (r_1^\ast s_2 u_2^\ast)^\frac12 } \right),
\end{align*}
where we have set
\[ v^\circ := \frac v{ \operatorname{cond}(\overline\chi) }. \]
We split up \( \Xi_{ 1 \text{b} }(N) \) into dyadic segments
\[ \Xi_{ 1 \text{b} }(N, T) := \sum_{T < t_j \leq 2T} \tilde G_\lambda^+(t_j) \frac{ \overline{ \rho_j }(\tilde m, \infty) \sqrt{\tilde m} }{ \sqrt{ \cosh(\pi t_j) } } \Sigma_j^{ (1) }(N), \]
and in the same way as above we can show that
\[ \Xi_{ 1 \text{b} }(N, T) \ll {v^\circ}^\frac12 {x_1}^\varepsilon h^\theta \frac{A^\ast}{T^\frac12 x_1} \left( {x_1}^\frac12 + \frac{A^\ast}{ v^\frac14 (r_1^\ast s_2 u_2^\ast)^\frac12 } \right), \]
which gives the same bound for \( \Xi_{ 1 \text{b} }(N) \) as for \( \Xi_{ 1 \text{a} }(N) \).
Finally, for \( \Xi_{ 1 \text{c} } \) we get
\[ \Xi_{ 1 \text{c} }(N) \ll {v^\circ}^\frac12 {x_1}^\varepsilon (r_2 x_1)^\theta \frac{A^\ast}{x_1} \left( {x_1}^\frac12 + \frac{ A^\ast }{ v^\frac14 (r_1^\ast s_2 u_2^\ast)^\frac12 } \right), \]
and all in all this leads to
\begin{align}
  \int \! G_0(\lambda) \Xi_1(N) \, d\lambda \ll {v^\circ}^\frac12 v (r_2 x_1)^{\frac12 + \theta + \varepsilon}. \label{eqn: first preliminary estimate}
\end{align}

In exactly the same manner, but using Lemma \ref{lemma: estimates for Fourier coefficients, I} instead of Lemma \ref{lemma: estimates for Fourier coefficients, II}, we can also get the bounds
\begin{align*}
  \Xi_{ 1 \text{a} }(N), \, \Xi_{ 1 \text{b} }(N) &\ll {v^\circ}^\frac12 {x_1}^\varepsilon \frac{A^\ast}{x_1} \left( {x_1}^\frac12 + \frac{A^\ast}{ v^\frac14 (r_1^\ast s_2 u_2^\ast)^\frac12 } \right) \left( 1 + \frac{ (r_1^\ast r_2 v , h )^\frac14 h^\frac14 }{ (r_1^\ast r_2)^\frac12 v^\frac14} \right),
\end{align*}
and
\begin{align*}
  \Xi_{ 1 \text{c} }(N) &\ll {v^\circ}^\frac12 {x_1}^\varepsilon \left( \frac{r_2 x_1}h \right)^\theta \frac{A^\ast}{x_1} \left( {x_1}^\frac12 + \frac{A^\ast}{ v^\frac14 (r_1^\ast s_2 u_2^\ast)^\frac12 } \right) \left( 1 + \frac{ (r_1^\ast r_2 v , h )^\frac14 h^\frac14 }{ (r_1^\ast r_2)^\frac12 v^\frac14} \right),
\end{align*}
so that
\begin{align}
  \int \! G_0(\lambda) \Xi_1(N) \, d\lambda \ll {v^\circ}^\frac12 v (r_2 x_1)^{\frac12 + \varepsilon} \left( \frac{r_2 x_1}{h} \right)^\theta \left( 1 + \frac{ (r_1 r_2 v , h )^\frac14 h^\frac14 }{ (r_1 r_2)^\frac12 v^\frac14} \right). \label{eqn: second preliminary estimate}
\end{align}
Furthermore since
\[ \frac{ {x_1}^\varepsilon }C \gg \frac{ {x_1}^\frac12 {r_2}^\frac12 }{ h^\frac12 } \gg \frac{ r_1^\ast r_2 v^\frac12 }{ (r_1^\ast r_2 v, h)^\frac12 h^\frac12 } = \frac{\tilde q}{ (\tilde q, \tilde m)^\frac12 {\tilde q_0}^\frac12 \tilde m^\frac12 }, \]
we can also make use of Lemma \ref{lemma: estimates for the exceptional eigenvalues} here, so that
\begin{align*}
  \Xi_{ 1 \text{c} }(N) &\ll \left( \vcent{ \sum_{t_j \leq 1} \frac{ | \rho_j(\tilde m, \infty) |^2 \tilde m }{ \cosh(\pi t_j) } \left( \frac{ {x_1}^\varepsilon  }C \right)^{4i t_j} } \right)^\frac12 \left( \vcent{ \sum_{t_j \leq 1} \left| \Sigma_j^{ (1) }(N) \right|^2 } \right)^\frac12 \\
    &\ll {v^\circ}^\frac12 {x_1}^{\theta + \varepsilon} \frac{ (r_1^\ast r_2 v, h)^\theta }{ ({r_1^\ast}^2 r_2 v)^\theta } \frac{A^\ast}{x_1} \left( {x_1}^\frac12 + \frac{A^\ast}{ v^\frac14 (r_1^\ast s_2 u_2^\ast)^\frac12 } \right) \left( 1 + \frac{ (r_1^\ast r_2 v , h )^\frac14 h^\frac14 }{ (r_1^\ast r_2)^\frac12 v^\frac14} \right),
\end{align*}
and hence
\begin{align}
  \int \! G_0(\lambda) \Xi_1(N) \, d\lambda \ll {v^\circ}^\frac12 v (r_2 x_1)^{\frac12 + \varepsilon} \left( x_1 \frac{ (r_1 r_2 v, h) }{ {r_1}^2 r_2 v } \right)^\theta \left( 1 + \frac{ (r_1 r_2 v , h )^\frac14 h^\frac14 }{ (r_1 r_2)^\frac12 v^\frac14} \right). \label{eqn: third preliminary estimate}
\end{align}

Now assume \( N_0^- \ll N \ll N_0^+ \).
We split \( \Xi_1(N) \) into three parts as follows,
\[ \Xi_1(N) = \sum_{t_j \leq Z} (\ldots) + \sum_{t_j > Z} (\ldots) + \sum_{ t_j \text{ exc.} } (\ldots). \]
The sum over the exceptional eigenvalues causes no problems in this case, as the respective Bessel transforms are very small.
The rest can be treated in the same way as above, and we get the bounds
\begin{align}
  \int \! G_0(\lambda) \Xi_1(N) \, d\lambda &\ll {v^\circ}^\frac12 v (r_2 x_1)^{\frac12 + \varepsilon} \frac{ h^\theta }{\Omega^\frac12}, \label{eqn: fourth preliminary estimate} \\
  \int \! G_0(\lambda) \Xi_1(N) \, d\lambda &\ll {v^\circ}^\frac12 v (r_2 x_1)^{\frac12 + \varepsilon} \frac1{ \Omega^\frac12 } \left( 1 + \Omega^\frac12 \frac{ (r_1 r_2 v , h )^\frac14 h^\frac14 }{ (r_1 r_2)^\frac12 v^\frac14} \right). \label{eqn: fifth preliminary estimate}
\end{align}

The same reasoning applies similarly to \( \Xi_2(N) \) and \( \Xi_3(N) \), the main difference being that we don't have to worry about exceptional eigenvalues at all.
In the end we get from \eqref{eqn: first preliminary estimate} and \eqref{eqn: fourth preliminary estimate},
\begin{align*}
  R_{AB}^+(N; \chi) &\ll {v^\circ}^\frac12 v (r_2 x_1)^{\frac12 + \varepsilon} \left( \frac{ h^\theta }{\Omega^\frac12} + (r_2 x_1)^\theta \right), \\
  \intertext{from \eqref{eqn: third preliminary estimate} and \eqref{eqn: fifth preliminary estimate},}
  R_{AB}^+(N; \chi) &\ll {v^\circ}^\frac12 v (r_2 x_1)^{\frac12 + \varepsilon} \left( \frac1{\Omega^\frac12} + \left( \frac{ x_1 (r_1 r_2 v, h) }{ {r_1}^2 r_2 v } \right)^\theta \left( 1 + \frac{ (r_1 r_2 v , h )^\frac14 h^\frac14 }{ (r_1 r_2)^\frac12 v^\frac14} \right) \right),
  \intertext{and from \eqref{eqn: second preliminary estimate} and \eqref{eqn: fifth preliminary estimate},}
  R_{AB}^+(N; \chi) &\ll {v^\circ}^\frac12 v (r_2 x_1)^{\frac12 + \varepsilon} \left( \frac1{\Omega^\frac12} + \left( \frac{r_2 x_1}h \right)^\theta \left( 1 + \frac{ (r_1 r_2 v , h )^\frac14 h^\frac14 }{ (r_1 r_2)^\frac12 v^\frac14} \right) \right).
\end{align*}
Taking account of \eqref{eqn: sum over conductors}, these bounds eventually lead to \eqref{eqn: first main estimate}, \eqref{eqn: second main estimate} and \eqref{eqn: third main estimate}.

\subsection{The main term} \label{subsection: The main term}

The only thing left to do is the evaluation of the main term.
After summing over all \(A\) and \(B\), it has the form
\begin{align}
  \Sigma^0 &:= \frac1{r_1} \sum_{u_2^\ast \mid u_2} \sum_\twoln{a}{ (a, s_2 u_2^\ast) = 1 } \frac1a \int \! \lambda_{ f_1 - r_1 g_2 \overline{s_1}, r_1 a }(\xi) f(\xi; a) \, d\xi \nonumber \\
    &= \int \! w_1\left( \frac{r_1 \xi + f_1}{x_1} \right) w_2\left( \frac{r_2 \xi + f_2}{x_2} \right) \left( \sum_{u_2^\ast \mid u_2} \tilde \Sigma^0(\xi, u_2^\ast) \right) \, d\xi, \label{eqn: definition of Sigma^0}
\end{align}
with
\begin{align}
  \tilde \Sigma^0(\xi, u_2^\ast) &:= \sum_\twoln{a}{ (a, s_2 u_2^\ast) = 1 } \frac{ \lambda_{ f_1 - r_1 g_2 \overline{s_2}, r_1 a }(r_1 \xi + f_1) }a h\left( \frac{u_2 a}{u_2^\ast}, \frac{u_2^\ast}{u_2 a} (r_2 \xi + f_2) \right) \nonumber \\
    &= \frac1{2\pi i} \int_{ (\sigma) } \! \hat h(s; \xi) Z(s; \xi) \, ds, \label{eqn: tilde Sigma as integral}
\end{align}
where \( \hat h(s; \xi) \) is the Mellin transform
\[ \hat h(s; \xi) := \int_0^\infty \! h\left( \frac{u_2 a}{u_2^\ast}, \frac{u_2^\ast}{u_2 a} (r_2 \xi + f_2) \right) a^{s - 1} \, da, \quad \Re(s) > 0, \]
and the function \( Z(s; \xi) \) is defined as the Dirichlet series
\[ Z(s; \xi) := \sum_\twoln{a}{ (a, s_2 u_2^\ast) = 1 } \frac{ \lambda_{ f_1 - r_1 g_2 \overline{s_2}, r_1 a }(r_1 \xi + f_1) }{ a^{1 + s} }, \quad \Re(s) > 0. \]
The integral in \eqref{eqn: tilde Sigma as integral} is initially defined for \( \sigma > 0 \).
Our plan is to move the line of integration to \( \sigma = -1 + \varepsilon \), so that we can use the residue theorem to extract a main term.
A meromorphic continuation of \( \hat h(s; \xi) \) can easily be found by using partial integration.
For \( Z(s; \xi) \) the situation is not quite as obvious.

Define the operator
\[ \Delta_\delta(\xi) := \left( \log\xi + 2\gamma + 2 \frac\partial{\partial \delta} \right) \bigg |_{\delta = 0}, \]
so that we can write
\[ \lambda_{ f_1 - r_1 g_2 \overline{s_2}, r_1 a }(r_1 \xi + f_1) = \Delta_\delta(r_1 \xi + f_1) \sum_{d \mid r_1 a} \frac{ c_d( f_1 - r_1 g_2 \overline{s_2} ) }{ d^{1 + \delta} }. \]
Now we separate the part of \(r_1\) which shares common factors with \( s_2 u_2^\ast\) from the rest by setting
\[ v := \left( r_1, (s_2 u_2^\ast)^\infty \right), \quad t_1 := \frac{r_1}v, \]
so that
\[ \sum_{d \mid r_1 a} \frac{ c_d( f_1 - r_1 g_2 \overline{s_2} ) }{ d^{1 + \delta} } = \left( \vcent{ \sum_{d \mid v} \frac{ c_d(f_1) }{ d^{1 + \delta} } } \right) \left( \vcent{ \sum_{d \mid t_1 a} \frac{ c_d(h_0 u_1) }{ d^{1 + \delta} } } \right), \]
and hence
\[ Z(s; \xi) = \Delta_\delta(r_1 \xi + f_1) \left( \vcent{ \sum_{d \mid v} \frac{ c_d(f_1) }{ d^{1 + \delta} } } \right) \sum_\twoln{a}{ (a, s_2 u_2^\ast) = 1 } \frac1{ a^{1 + s} } \sum_{d \mid t_1 a} \frac{ c_d(h_0 u_1) }{ d^{1 + \delta} }. \]
The two outer sums can be transformed to
\[ \sum_\twoln{a}{ (a, s_2 u_2^\ast) = 1 } \frac1{ a^{1 + s} } \sum_{d \mid t_1 a} \frac{ c_d(h_0 u_1) }{ d^{1 + \delta} } = \sum_\twoln{d}{ (d, s_2 u_2^\ast) = 1 } \frac{ c_d(h_0 u_1) (d, t_1) }{ d^{2 + \delta} } \tilde Z(s; d), \]
with
\[ \tilde Z(s; d) := \zeta(1 + s) \frac{ (d, t_1)^s }{ d^s } \prod_{ p \mid s_2 u_2^\ast } \left( 1 - \frac1{ p^{1 + s} } \right). \]
This is a meromorphic function, defined on the whole complex plane, which means that the desired meromorphic continuation for \( Z(s; \xi) \) can be given by
\[ Z(s; \xi) = \Delta_\delta(r_1 \xi + f_1) \left( \vcent{ \sum_{d \mid v} \frac{ c_d(f_1) }{ d^{1 + \delta} } } \right) \left( \vcent{ \sum_\twoln{d}{ (d, s_2 u_2^\ast) = 1 } \frac{ c_d(h_0 u_1) (d, t_1) }{ d^{2 + \delta} } \tilde Z(s; d) } \right). \]
Hence
\[ \tilde \Sigma^0(\xi, u_2^\ast) = \Delta_\delta(r_1 \xi + f_1) \left( \vcent{ \sum_{d \mid v} \frac{ c_d(f_1) }{ d^{1 + \delta} } } \right) \left( \vcent{ \sum_\twoln{d}{ (d, s_2 u_2^\ast) = 1 } \frac{ c_d(h_0 u_1) (d, t_1) }{ d^{2 + \delta} } \tilde I^0(\xi, d) } \right), \]
with
\[ \tilde I^0(\xi, d) := \frac1{2\pi i} \int_{ (\sigma) } \! \hat h(s; \xi) \tilde Z(s; d) \, ds. \]

The Mellin transform \( \hat h(s; \xi) \) has at \( s = 0 \) the Taylor expansion
\[ \hat h(s; \xi) = \frac2s + \log(r_2 \xi + f_2) + 2 \log\frac{u_2^\ast}{u_2} + \BigO{s}, \]
while that of \( \tilde Z(s; d) \) is given by
\[ \tilde Z(s; d) = \left( \frac1s + \gamma + \frac\partial{\partial \rho} \right) \bigg|_{\rho = 0} \frac{ (d, t_1)^\rho }{ d^\rho } \prod_{ p \mid s_2 u_2^\ast } \left( 1 - \frac1{ p^{1 + \rho} } \right) + \BigO{s}. \]
All in all, the residue of their product at \( s = 0 \) is
\[ \Res{s = 0} \left( \hat h(s; \xi) \tilde Z(s; d) \right) = \Delta_\rho(r_2 \xi + f_2) \left( \frac{u_2^\ast}{u_2} \right)^\rho \frac{ (d, t_1)^\rho }{ d^\rho } \prod_{ p \mid s_2 u_2^\ast } \left( 1 - \frac1{ p^{1 + \rho} } \right). \]
We now move the line of integration to \( \sigma = -1 + \varepsilon \),
\[ \tilde I^0(\xi, d) = \Delta_\rho(r_2 \xi + f_2) \left( \frac{u_2^\ast}{u_2} \right)^\rho \frac{ (d, t_1)^\rho }{ d^\rho } \prod_{ p \mid s_2 u_2^\ast } \left( 1 - \frac1{ p^{1 + \rho} } \right) + \BigO{ \frac{ d^{1 - \varepsilon} }{ {x_2}^{\frac12 - \varepsilon} } }, \]
and hence
\[ \tilde \Sigma^0(\xi, u_2^\ast) = \Delta_\delta(r_1 \xi + f_1) \Delta_\rho(r_2 \xi + f_2) \tilde M_{\delta, \rho}^0(\xi, u_2^\ast) + \BigO{ \frac{ {x_2}^\varepsilon }{ {x_2}^\frac12 } }, \]
with
\begin{multline*}
  \tilde M_{\delta, \rho}^0(\xi, u_2^\ast) := \left( \frac{u_2^\ast}{u_2} \right)^\rho \left( \vcent{ \sum_{d \mid v} \frac{ c_d(f_1) }{ d^{1 + \delta} } } \right) \prod_{ p \mid s_2 u_2^\ast } \left( 1 - \frac1{ p^{1 + \rho} } \right) \\
    \cdot \left( \vcent{ \sum_\twoln{d}{ (d, s_2 u_2^\ast) = 1 } \frac{ c_d(h_0 u_1) (d, t_1)^{1 + \rho} }{ d^{2 + \delta + \rho} } } \right).
\end{multline*}
An elementary but quite tedious calculation shows that this product can be transformed in such a way that
\[ \sum_{ u_2^\ast \mid u_2 } \tilde M_{\delta, \rho}^0(\xi, u_2^\ast) = C_{\delta, \rho}(r_1, r_2, f_1, f_2), \]
where
\[ C_{\delta, \rho}(r_1, r_2, f_1, f_2) := \sum_\twoln{ u_1^\ast \mid u_1 }{ u_2^\ast \mid u_2 } \left( \frac{u_1^\ast}{u_1} \right)^\delta \left( \frac{u_2^\ast}{u_2} \right)^\rho \psi_\delta(s_1 u_1^\ast) \psi_\rho(s_2 u_2^\ast) \gamma_{\delta + \rho}(s_1 u_1^\ast s_2 u_2^\ast), \]
with
\[ \psi_\alpha(n) := \prod_{p \mid n} \left( 1 - \frac1{ p^{1 + \alpha} } \right) \quad \text{and} \quad \gamma_\alpha(n) := \sum_{ (d, n) = 1 } \frac{ c_d(h_0) }{ d^{2 + \alpha} }. \]
After a look back at \eqref{eqn: definition of Sigma^0}, we see that our main term has the form
\[ \Sigma^0 = M(x_1, x_2) + \BigO{ \frac{ {x_2}^{\frac12 + \varepsilon} }{r_2} }, \]
with
\[ M(x_1, x_2) := \int \! w_1\left( \frac{r_1 \xi + f_1}{x_1} \right) w_2\left( \frac{r_2 \xi + f_2}{x_2} \right) P( \log(r_1 \xi + f_1), \log(r_2 \xi + f_2) ) \, d\xi, \]
where \( P(\xi_1, \xi_2) \) is the quadratic polynomial given by
\begin{align} \label{eqn: the polynomial in the main term}
  P(\log \xi_1, \log \xi_2) := \Delta_\delta(\xi_1) \Delta_\rho(\xi_2) C_{\delta, \rho}(r_1, r_2, f_1, f_2).
\end{align}
This concludes the proof of \eqref{eqn: main term}.

\bibliography{On_a_certain_additive_divisor_problem}

\end{document}